
\documentclass[12pt]{amsart}%
\usepackage{amssymb}
\usepackage{amsfonts}
\usepackage{amsmath}
\usepackage{graphicx}%
\setcounter{MaxMatrixCols}{30}
\providecommand{\U}[1]{\protect\rule{.1in}{.1in}}

\makeatletter
\@namedef{subjclassname@2010}{  \textup{2010} Mathematics Subject Classification}
\makeatother
\newtheorem{theorem}{Theorem}[section]
\theoremstyle{plain}

\newtheorem{corollary}[theorem]{Corollary}

\newtheorem{example}[theorem]{Example}

\newtheorem{fact}[theorem]{Fact}
\newtheorem{lemma}[theorem]{Lemma}

\newtheorem{proposition}[theorem]{Proposition}

\numberwithin{equation}{section}

\theoremstyle{definition}
\newtheorem{convention}[theorem]{Convention}
\newtheorem{definition}[theorem]{Definition}
\newtheorem{remark}[theorem]{Remark}

\newcommand {\n}{\mathbb{N}}
\newcommand {\starn}{\,^*\mathbb{N}}

\newcommand {\z}{\mathbb{Z}}
\newcommand {\ld}{\underline{d}}
\newcommand {\ud}{\overline{d}}

\newcommand {\uld}{\overline{ld}}
\newcommand {\lld}{\underline{ld}}

\newcommand {\st}{\operatorname{st}}

\def \BD{\operatorname{BD}}
\def \lBD{\operatorname{\ell BD}}
\def \r{\mathbb{R}}

\begin{document}
\baselineskip=18pt

\title[A monad measure space for logarithmic density]{A monad measure space for logarithmic density}
\author[Di Nasso et. al.]{Mauro Di Nasso, Isaac Goldbring, Renling Jin,
Steven Leth, Martino Lupini, Karl Mahlburg}
\thanks{The authors were supported in part by the American Institute of Mathematics through its SQuaREs program.  I. Goldbring was partially supported by NSF CAREER grant DMS-1349399.  M. Lupini was supported by the York University Susan Mann Dissertation Scholarship.  K. Mahlburg was supported by NSF Grant DMS-1201435.}
\address{Dipartimento di Matematica, Universita' di Pisa, Largo Bruno
Pontecorvo 5, Pisa 56127, Italy}
\email{dinasso@dm.unipi.it}
\address{Department of Mathematics, Statistics, and Computer Science,
University of Illinois at Chicago, Science and Engineering Offices M/C 249,
851 S. Morgan St., Chicago, IL, 60607-7045}
\email{isaac@math.uic.edu}
\address{Department of Mathematics, College of Charleston, Charleston, SC,
29424}
\email{JinR@cofc.edu}
\address{School of Mathematical Sciences, University of Northern Colorado,
Campus Box 122, 510 20th Street, Greeley, CO 80639}
\email{Steven.Leth@unco.edu}
\address{Department of Mathematics and Statistics\\
N520 Ross, 4700 Keele Street\\
Toronto Ontario M3J 1P3, Canada, and Fields Institute for Research in
Mathematical Sciences\\
222 College Street\\
Toronto ON M5T 3J1, Canada}
\email{mlupini@yorku.ca}
\address{Department of Mathematics, Louisiana State University, 228 Lockett
Hall, Baton Rouge, LA 70803}
\email{mahlburg@math.lsu.edu}
\thanks{}
\date{}
\keywords{nonstandard analysis, log density}
\subjclass[2010]{}
\dedicatory{ }

\begin{abstract}
We provide a framework for proofs of  structural theorems about sets with positive Banach logarithmic density.  For example, we prove that if $A\subseteq \n$ has positive Banach logarithmic density, then $A$ contains an approximate geometric progression of any length.  We also prove that if $A,B\subseteq \n$ have positive Banach logarithmic density, then there are arbitrarily long intervals whose gaps on $A\cdot B$ are multiplicatively bounded, a multiplicative version Jin's sumset theorem.    The main technical tool is the use of a quotient of a Loeb measure space with respect to a multiplicative cut.
\end{abstract}

\maketitle

\section{Introduction\label{Section: Introduction and Preliminaries}}

Szemeredi's theorem states that if $A\subseteq \z$ has positive upper density, then $A$ contains arbitrarily large arithmetic progressions.  The main idea behind Furstenberg's proof of Szemeredi's theorem was to associate to the aforementioned set $A$ a dynamical system $(X,\mu,T)$ and a measurable set $E\subseteq X$ with $\ud(A)=\mu(E)$ satisfying, for any finite $F\subseteq \z$:
$$\ud\left(\bigcap_{i\in F}(A-i)\right)\geq \mu\left(\bigcap_{i\in F}T^{-i}(E)\right).$$  This association, now called the \emph{Furstenberg correspondence principle}, converted the task of proving Szemeredi's theorem into the task of proving a theorem of ergodic theory, now referred to as \emph{Furstenberg's multiple recurrence theorem}.  Furstenberg's correspondence principle holds for any countable amenable semigroup (with densities calculated with respect to particular F{\o}lner sequences) and there are many generalizations of Furstenberg's recurrence theorem.  In short, Furstenberg's correspondence has led to a large collection of structural results in combinatorial number theory.

Nonstandard analysis provides an elegant way of establishing Furstenberg's original correspondence theorem.  (For an
introduction to nonstandard methods aimed specifically toward applications to
combinatorial number theory see \cite{jinintro}.)
Indeed, one can consider the \emph{hyperfinite interval} $[-N,N]\subseteq {}^*\z$, equipped with its \emph{Loeb measure} $\mu_L$, which is the $\sigma$-additive measure obtained from the finitely-additive counting measure $\mu(A):=\st(\frac{|A|}{2N+1})$ defined on the algebra of hyperfinite subsets of $[-N,N]$ using the Caratheodory extension theorem.  By the nonstandard characterization of upper density, there is an infinite $N\in {}^*\n$ for which $\ud(A)=\mu_L({}^*A\cap [-N,N])$.  Letting $T:[-N,N-1]\to [-N,N]$ be addition by $1$ (which is easily seen to be measure preserving and defined on a measure $1$ set), the dynamical system $([-N,N],\mu_L,T)$ and the measurable set $E:={}^*A\cap [-N,N]$ witness the conclusion of the Furstenberg correspondence principle.

In this paper, we consider a different kind of density, namely \emph{logarithmic density} (see Section 2 for the precise definition) and seek to associate an appropriate measure space to sets of positive logarithmic density.  Using the nonstandard characterization of logarithmic density, this is accomplished in the same manner as in the previous paragraph.  However, this Loeb measure space contains a serious deficiency, namely the fact that multiplication is not measure preserving.  The main result in this paper is that multiplication is measure-preserving on an appropriate quotient of the associated Loeb measure space.

Initially, we had hoped to use this fact to deduce approximate geometric structure in sets of positive logarithmic density.  Indeed, one can use Furstenberg's multiple recurrence theorem on the quotient space to obtain actual geometric structure in the quotient space, which, when pulled back to the original Loeb space and combined with the transfer principle, would yield approximate geometric structure in the original subset of the integers.  While this process is valid and briefly explained in Section 3, in an upcoming paper we show that we can actually use the original Szemeredi theorem, combined with a ``logarithmic change of coordinates,'' to more directly obtain the aforementioned approximate geometric structure and with better bounds on the nature of the approximation.  Thus, we leave it as an open problem to find more sophisticated applications of the fact that multiplication on our quotient measure space is measure-preserving.

We then briefly discuss a family of densities on subsets of ${\mathbb N}$ for which the corresponding sets of positive measure in the quotient space contain arbitrarily long powers of arithmetic progressions.

In the next to last section, we show that the Lebesgue density theorem is valid in the aforementioned quotient measure space.  In the last section, we use the Lebesgue density theorem to prove a multiplicative analog of a result of Jin \cite{jin}, namely that if $A$ and $B$ both have positive Banach log density, then there are arbitrarily long intervals on which $A\cdot B$ has multiplicatively bounded gaps.

\subsection{Acknowledgements}

This work was initiated during a week-long meeting at the American
Institute for Mathematics on August 4-8, 2014 as part of the SQuaRE (Structured
Quartet Research Ensemble) project \textquotedblleft Nonstandard Methods in
Number Theory.\textquotedblright\ The authors would like to thank the
Institute for the opportunity and for the Institute's hospitality during their stay.

\section{Densities, cuts, and measures
\label{Section: Upper Log Density and Upper Banach Log Density}}

\subsection{Densities}

\begin{convention}
In this paper, $\n$ denotes the set of \emph{positive} natural numbers.
\end{convention}

\noindent For the convenience of the reader, we recall the following:

\begin{definition}
Suppose that $A\subseteq \n$.  Then:
\begin{itemize}
\item The \emph{upper density of $A$} is defined to be
$$\ud(A):=\limsup_{n\rightarrow\infty}\frac{|A\cap [1,n]|}{n}.$$
\item The \emph{lower density of $A$} is defined to be
$$\ld(A):=\liminf_{n\rightarrow\infty}\frac{|A\cap [1,n]|}{n}.$$
\end{itemize}
\end{definition}

\noindent We also recall the definitions of \emph{logarithmic densities}:

\begin{definition}
Suppose that $A\subseteq \n$.  Then:
\begin{itemize}
\item The \emph{upper logarithmic density of $A$} is defined to be
$$\uld(A):=\limsup_{n\rightarrow\infty}\frac{1}{\ln n}\sum_{x\in A\cap [1,n]}\frac{1}{x}.$$
\item The \emph{lower logarithmic density of $A$} is defined to be
$$\lld(A):=\liminf_{n\rightarrow\infty}\frac{1}{\ln n}\sum_{x\in A\cap [1,n]}\frac{1}{x}.$$
\end{itemize}
\end{definition}

%
%

When dealing with logarithmic densities, it is useful to recall that, setting $H_n:=\sum_{k=1}^n \frac{1}{k}$ (the so-called \emph{$n^{\text{th}}$ harmonic number}), we have $\lim_{n\to \infty}(H_n -\ln n)=\gamma$, the so-called \emph{Euler-Mascheroni} constant.  For example, it follows easily that $\uld(\n)=\lld(\n)=1$.

The proof of the following lemma is straightforward.

\begin{lemma}
Suppose that $A,B\subseteq \n$ and $n\in \n$.
\begin{enumerate}
\item $\uld(A+n)=\uld(A)$ and $\lld(A+n)=\lld(A)$.
\item If $A\triangle B$ is finite, then $\uld(A)=\uld(B)$ and $\lld(A)=\lld(B)$.
\end{enumerate}
\end{lemma}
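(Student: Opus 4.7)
The plan is to prove both parts by elementary estimates showing that the numerators in the definitions of $\uld$ and $\lld$ are insensitive to bounded perturbations of the underlying set, so that after normalizing by $\ln n$ any such perturbation vanishes in the $\limsup$ or $\liminf$.

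For part (2), I would observe that if $A\triangle B$ is finite, then for every $n$,
$$\left|\sum_{x\in A\cap [1,n]}\frac{1}{x}-\sum_{x\in B\cap [1,n]}\frac{1}{x}\right|\leq \sum_{x\in A\triangle B}\frac{1}{x},$$
and the right side is a fixed constant independent of $n$. Dividing by $\ln n$ and passing to the $\limsup$ (resp.\ $\liminf$) as $n\to \infty$ immediately gives equality of the upper (resp.\ lower) logarithmic densities.

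For part (1), I would rewrite the defining sum for $\uld(A+n)$ using the bijection $a\mapsto a+n$, giving
$$\sum_{x\in (A+n)\cap [1,m]}\frac{1}{x}=\sum_{a\in A\cap [1,m-n]}\frac{1}{a+n}$$
for $m>n$. The plan is then to bound the discrepancy between this sum and $\sum_{a\in A\cap [1,m]}\frac{1}{a}$ by two contributions: a \emph{tail term} coming from the at most $n$ indices in $(m-n,m]$, which contributes at most $n/(m-n+1)=o(1)$; and a \emph{shift term} $\sum_{a\in A\cap [1,m-n]}\bigl(\frac{1}{a}-\frac{1}{a+n}\bigr)$, which by the telescoping identity $\sum_{a=1}^\infty \bigl(\frac{1}{a}-\frac{1}{a+n}\bigr)=H_n$ is bounded above by the constant $H_n$, independently of $m$. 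Thus the total discrepancy is $O(1)$, so dividing by $\ln m$ and taking limits yields the equality of both densities.

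I do not anticipate any real obstacle. The one observation worth isolating is that the shift term is uniformly bounded, but this drops out of the telescoping computation above; everything else is routine bookkeeping.
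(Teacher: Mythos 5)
Your proposal is correct. The paper omits the proof entirely (it declares the lemma straightforward), and your argument -- bounding the perturbation of the weighted sums by the constant $\sum_{x\in A\triangle B}\frac{1}{x}$ for part (2), and by the tail term plus the telescoping bound $\sum_{a\ge 1}\bigl(\tfrac{1}{a}-\tfrac{1}{a+n}\bigr)=H_n$ for part (1) -- is exactly the routine verification the authors had in mind, since any $O(1)$ discrepancy vanishes after dividing by $\ln m$.
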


The following fact is the content of \cite[Lemma 2.1(e)(f)]{BBHS2}:

\begin{fact}\label{comparingdensities}
For $A\subseteq \n$, we have $\ld(A)\leqslant\lld(A)\leqslant\uld(A)\leqslant\ud(A)$.
\end{fact}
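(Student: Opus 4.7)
The middle inequality $\lld(A)\leq \uld(A)$ is automatic: it is the $\liminf\leq\limsup$ of the same sequence $\frac{1}{\ln n}\sum_{x\in A\cap[1,n]}\frac{1}{x}$. So the work lies in the two outer inequalities, and by passing to the complement (or by arguing symmetrically) the two are very parallel. My plan is to prove both at once using Abel summation to express the logarithmic-density averages in terms of the counting function $a(n):=|A\cap[1,n]|$, and then compare.

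Concretely, writing $\mathbf{1}_A(k)=a(k)-a(k-1)$ with $a(0):=0$, partial summation gives
\[
\sum_{k=1}^{n}\frac{\mathbf{1}_A(k)}{k}=\frac{a(n)}{n}+\sum_{k=1}^{n-1}\frac{a(k)}{k(k+1)}.
\]
Dividing by $\ln n$, the first term $\tfrac{a(n)}{n\ln n}$ is $O(1/\ln n)$ and vanishes in the limit. Thus both $\uld(A)$ and $\lld(A)$ are respectively the $\limsup$ and $\liminf$ of
\[
\frac{1}{\ln n}\sum_{k=1}^{n-1}\frac{1}{k+1}\cdot\frac{a(k)}{k}.
\]
This exhibits the logarithmic density as a weighted average of the density ratios $a(k)/k$, with weights $\frac{1}{k+1}$ whose partial sums are asymptotic to $\ln n$ (by the harmonic-number asymptotics recalled just after Definition 2).

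For the inequality $\uld(A)\leq \ud(A)$, fix $\varepsilon>0$ and choose $K$ so that $a(k)/k\leq \ud(A)+\varepsilon$ for all $k\geq K$. Split the sum at $K$: the head contributes a constant $C_K$ divided by $\ln n$, which vanishes, while the tail is bounded by $(\ud(A)+\varepsilon)\cdot\frac{1}{\ln n}\sum_{k=K}^{n-1}\frac{1}{k+1}$, whose limit is $\ud(A)+\varepsilon$. Letting $\varepsilon\to 0$ yields $\uld(A)\leq \ud(A)$. For $\ld(A)\leq \lld(A)$, a mirror-image argument works: given $\varepsilon>0$, take $K$ so that $a(k)/k\geq \ld(A)-\varepsilon$ for $k\geq K$, and use the same weighted-average representation to conclude $\lld(A)\geq \ld(A)-\varepsilon$.

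There is no real obstacle here beyond bookkeeping; the only point requiring a small amount of care is making sure the tail of the weighted average $\frac{1}{\ln n}\sum_{k=K}^{n-1}\frac{1}{k+1}$ is indeed asymptotic to $1$ (not merely bounded), which is immediate from $H_n=\ln n+\gamma+o(1)$. If one prefers a cleaner packaging, the same argument can be phrased as an instance of the general principle that any regular averaging scheme whose weights are eventually concentrated on indices where $a(k)/k$ lies in $[\ld(A)-\varepsilon,\ud(A)+\varepsilon]$ produces a limit in that same interval; the logarithmic weights $1/(k+1)$ qualify because their cumulative mass $\sim \ln n$ places negligible weight on any fixed initial segment.
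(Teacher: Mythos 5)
Your proof is correct. The Abel summation identity is right (with $a(0)=0$ the boundary term is $\tfrac{a(n)}{n}=O(1)$, so dividing by $\ln n$ kills it), the normalization $\tfrac{1}{\ln n}\sum_{k=1}^{n-1}\tfrac{1}{k+1}\to 1$ follows from $H_n=\ln n+\gamma+o(1)$ as you say, and the two $\varepsilon$-splitting arguments are the standard regular-summability comparison; together with the trivial $\liminf\leq\limsup$ for the middle inequality, this gives all three inequalities. However, your route is genuinely different from the paper's. The paper only proves $\ld(A)\leq\lld(A)$ directly and obtains $\uld(A)\leq\ud(A)$ by passing to complements via $\ud(A)=1-\ld(\n\setminus A)$ and $\uld(A)=1-\lld(\n\setminus A)$; its direct argument is nonstandard rather than classical: working at an infinite hypernatural $H$, it compares $A\cap[1,H]$ term-by-term with an auxiliary set $B$ obtained by ``pushing the elements of $A$ to the right'' into an arithmetic progression of density $\alpha<\ld(A)$, using that $a_n\leq b_n$ forces $\sum 1/a_n\geq\sum 1/b_n$. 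That rearrangement argument fits the paper's nonstandard toolkit and makes the monotonicity heuristic transparent, but it needs the complementation step and the reduction to $\inf_n a(n)/n>\alpha$. Your Abel-summation proof is classical, self-contained, treats both outer inequalities symmetrically from a single weighted-average identity, and isolates the real mechanism (logarithmic weights are a regular averaging of the density ratios $a(k)/k$); it is arguably the cleaner proof, just not in the spirit of the paper's nonstandard presentation.
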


We would like to offer an alternative proof of the preceding fact.  We will only prove that $\ld(A)\leq \lld(A)$; the other inequality follows from the inequality for lower densities and the fact that $\ud(A)=1-\ld(\n \setminus A)$ and $\uld(A)=1-\lld(\n\setminus A)$.  The heuristic behind our proof is simple:  the logarithmic density of a set can only decrease if we ``push the elements of the set to the right;'' such a shift should leave the lower density fixed.  Here are the specifics:

Set $f_A:\n\to \r$ to be defined by $f_A(n):=\sum_{x\in A\cap [1,n]}\frac{1}{x}$. Without loss of generality, we may assume $\ld(A)>0$.  Take $\alpha<\ld(A)$ and $H>\n$.  It suffices to show that $\st(\frac{f_A(H)}{\ln H})\geq \alpha$.  Since two sets that differ by only a finite number of elements have the same lower density and the same lower logarithmic density, we can assume that
$\inf_{n\geqslant 1}\frac{|A\cap [1,n]|}{n}>\alpha$.  

Let $m:=|{}^{\ast }\!{A}\cap [1,H]|$ and set $$B=\left\{\left\lfloor \frac{x}{\alpha}\right\rfloor+1:x\in [1,m]\right\}\cap [1,H].$$

\

Next observe that, for every $k\in [1,H]$, we have $\frac{|B\cap [1,k]|}{k}\leq \alpha$.  (Without taking integer parts, $B$ would be an arithmetic progression of real numbers, whence the densities are clearly bounded by $\alpha$; by taking integer parts and then adding $1$, if anything, we have reduced the densities.)
%
%
Let $K:=|B|$.  Let $(a_n \ : \ n\leq m)$ and $(b_n \ : \ n\leq K)$ be the enumerations of $A\cap [1,H]$ and $B$ in increasing order.  Since $\alpha<\frac{|{}^{\ast }\!{A}\cap [1,k]|}{k}$ for each $k\in [1,H]$, it follows that $a_n\leq b_n$ for all $n\leq K$.  We thus get that
$$f_A(H)=\sum_{n=1}^m \frac{1}{a_n}\geq \sum_{n=1}^K \frac{1}{a_n}\geq \sum_{n=1}^K \frac{1}{b_n}=:f_B(H).$$
Since $\frac{f_B(H)}{\ln H}\approx\frac{\alpha\ln(H)}{\ln H}=\alpha$, it follows that $\st(\frac{f_A(H)}{\ln H})\geq \alpha$. 

\


\noindent We also recall the following definition:

\begin{definition}
For $A\subseteq \n$, the \emph{(upper) Banach density} of $A$ is defined to be
$$\BD(A):=\lim_{n\to \infty} \sup_{k\geq 1}\frac{|A\cap [k,k+n]|}{n+1}.$$
\end{definition}

Of course, for the preceding definition to be legitimate, one must prove that the limit involved always exists.  This is a rather straightforward argument; it also follows immediately from Fekete's Lemma (see \cite{Fekete}).

We now want to define a Banach version of logarithmic density; to do so, we must show that the corresponding limit exists.

\begin{lemma}\label{analysislemma}
Suppose that $g:\n\to \r$ is a nondecreasing function satisfying, for all $j,n\in \n$, the inequality $g(n^j)\leq jg(n)$.  Then $\lim_{n\to \infty}\frac{g(n)}{\ln n}$ exists and equals $\inf_{n\geq 1} \frac{g(n)}{\ln n}$.
\end{lemma}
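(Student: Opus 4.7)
The plan is to adapt the classical Fekete subadditive lemma to this multiplicative setting, viewing the hypothesis $g(n^{j}) \leq j g(n)$ as subadditivity after the change of variables $n \mapsto \ln n$. I set $L := \inf_{n \geq 2}\frac{g(n)}{\ln n}$; the $n=1$ term is problematic only formally, and the hypothesis applied with $n=1$ yields $g(1) \leq j g(1)$ for all $j$, forcing $g(1) \geq 0$, whence monotonicity gives $g \geq 0$ throughout $\n$ (so $L \geq 0$ and no sign complications arise when dividing later).

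The inequality $\liminf_{n \to \infty}\frac{g(n)}{\ln n} \geq L$ is immediate from the definition of $L$, so the real content is to show $\limsup_{n \to \infty}\frac{g(n)}{\ln n} \leq L$. For this, I would fix $\epsilon > 0$, choose $m \geq 2$ with $\frac{g(m)}{\ln m} < L + \epsilon$, and for each $n \geq m$ pick the unique integer $j = j(n) \geq 1$ satisfying $m^{j} \leq n < m^{j+1}$. Monotonicity of $g$ together with the hypothesis then combine to give
$$g(n) \leq g(m^{j+1}) \leq (j+1)\, g(m),$$
while $\ln n \geq j \ln m$, so
$$\frac{g(n)}{\ln n} \leq \frac{j+1}{j}\cdot\frac{g(m)}{\ln m} < \left(1+\frac{1}{j}\right)(L+\epsilon).$$
Since $j(n) \to \infty$ as $n \to \infty$, this yields $\limsup_{n \to \infty}\frac{g(n)}{\ln n} \leq L + \epsilon$, and letting $\epsilon \to 0$ completes the argument.

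The only mild obstacle is keeping the direction of inequalities correct when dividing by $\ln n$, which is why I would nail down $g \geq 0$ at the outset; without that observation, one would have to split into cases depending on the sign of $g(m)$ and invoke the upper bound $\ln n < (j+1)\ln m$ in the negative case. Thanks to monotonicity, this complication does not arise, and the proof reduces to Fekete's lemma transported through the logarithm.
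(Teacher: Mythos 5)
Your proof is correct and is essentially the paper's argument: the paper takes an infinite $N$ with $n^{j}\leq N<n^{j+1}$ and observes $\frac{g(N)}{\ln N}\leq\left(1+\frac{1}{j}\right)\frac{g(n)}{\ln n}\approx\frac{g(n)}{\ln n}$, which is exactly your key inequality rendered in nonstandard language (an infinite $j$ in place of your $j(n)\to\infty$). Your added observation that $g\geq 0$ (needed to justify enlarging the denominator from $\ln n$ to $j\ln m$) is a point the paper glosses over, and is a worthwhile clarification.
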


\begin{proof}
It is enough to show that, for every $n\in \n$ and $N\in {}^*\n\setminus \n$, we have $\st(\frac{g(N)}{\ln N})\leq \frac{g(n)}{\ln n}$.  Take $j\in {}^*\n$ such that $n^j\leq N<n^{j+1}$; note that $j>\n$.  We conclude by observing that
$$\frac{g(N)}{\ln N}\leq \frac{(j+1)g(n)}{j\ln n}=(1+\frac{1}{j})\frac{g(n)}{\ln n}\approx \frac{g(n)}{\ln n}.$$
\end{proof}

\begin{proposition}
For any $A\subseteq \n$, the limit
$$\lim_{n\to \infty}\sup_{k\geq 1}\frac{1}{\ln n}\sum_{x\in A\cap [k,nk]}\frac{1}{x}$$ exists and equals $$\inf_{n\geq 1} \sup_{k\geq 1}\frac{1}{\ln n}\sum_{x\in A\cap [k,nk]}\frac{1}{x}.$$
\end{proposition}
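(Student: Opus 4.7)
The plan is to reduce the claim to Lemma \ref{analysislemma} by setting, for $n \in \n$,
$$g(n) := \sup_{k \geq 1} \sum_{x \in A \cap [k, nk]} \frac{1}{x}.$$
If I can verify that $g$ is nondecreasing and that $g(n^j) \leq j\, g(n)$ for all $j, n \in \n$, then Lemma \ref{analysislemma} immediately yields both the existence of $\lim_{n\to\infty} g(n)/\ln n$ and its equality with $\inf_{n \geq 1} g(n)/\ln n$, which is exactly the desired statement.

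Monotonicity of $g$ is immediate: for $n_1 \leq n_2$ and any $k \geq 1$, we have $[k, n_1 k] \subseteq [k, n_2 k]$, so the inner sum is monotone in $n$, and this passes to the supremum over $k$.

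The main step is the inequality $g(n^j) \leq j\, g(n)$. I would fix $k \geq 1$ and use the telescoping decomposition
$$[k, n^j k] \;=\; \bigcup_{i=0}^{j-1} [n^i k, n^{i+1} k].$$
Every element of $A \cap [k, n^j k]$ lies in at least one of the pieces (endpoints of the form $n^i k$ with $1 \leq i \leq j-1$ lie in two), so
$$\sum_{x \in A \cap [k, n^j k]} \frac{1}{x} \;\leq\; \sum_{i=0}^{j-1} \sum_{x \in A \cap [n^i k, n^{i+1} k]} \frac{1}{x}.$$
For each $i$ the inner sum has the form $\sum_{x \in A \cap [k', n k']} 1/x$ with $k' := n^i k \geq 1$, hence is bounded by $g(n)$. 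Thus the total is at most $j\, g(n)$, and taking the supremum over $k$ on the left yields $g(n^j) \leq j\, g(n)$, as needed.

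I do not expect a substantive obstacle; the only thing to be attentive to is the harmless double-counting of interior endpoints in the union decomposition, which is irrelevant since we only need an upper bound. Once the two hypotheses are in hand, Lemma \ref{analysislemma} finishes the proof.
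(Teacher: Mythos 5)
Your proof is correct and follows essentially the same route as the paper: define $g(n):=\sup_{k\geq 1}\sum_{x\in A\cap[k,nk]}\frac{1}{x}$, verify monotonicity and $g(n^j)\leq jg(n)$ via the telescoping decomposition of $[k,n^jk]$ into $j$ blocks of ratio $n$, and invoke Lemma \ref{analysislemma}. The only cosmetic difference is that the paper uses half-open blocks $[kn^{s-1},kn^s)$ to make the decomposition disjoint, whereas you use closed blocks and correctly note that the endpoint double-counting is harmless for an upper bound.
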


\begin{proof}
Define $g:\n\to \r$ by
$$g(n)\ =\ \sup_{k\ge 1}\left(\sum_{x\in[k,kn)\cap A}\frac{1}{x}\right).$$  Clearly $g$ is nondecreasing, so, by Lemma \ref{analysislemma}, it suffices to show that $g(n^j)\leq jg(n)$ for all $j,n\in \n$.
To see this, it suffices to observe that, for a fixed $k$, one has
$$\sum_{x\in[k,kn^j)\cap A}\frac{1}{x}\ =\ 
\sum_{s=1}^{j}\left(\sum_{x\in[kn^{s-1},kn^s)\cap A}\frac{1}{x}\right)\ \le\ 
\sum_{s=1}^j g(n)\ =\ j\cdot g(n).$$
%

\end{proof}

%

\noindent We are thus entitled to make the following:

\begin{definition}
For $A\subseteq \n$, the \emph{(upper) Banach log density} of $A$ is 
$$\lBD(A):=\lim_{n\to \infty}\sup_{k\geq 1}\frac{1}{\ln n}\sum_{x\in A\cap [k,nk]}\frac{1}{x}.$$
\end{definition}

Of course one could also define the lower Banach log density, but in this paper we only focus on the upper Banach log density.

The next proposition can be proven in a manner analogous to the corresponding statement for upper log density.
\begin{proposition}
For any $A\subseteq\n$, we have $\lBD(A)\leq \BD(A)$.
\end{proposition}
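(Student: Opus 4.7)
The plan is to mimic the proof given above for $\uld(A) \le \ud(A)$, namely to exploit a ``push-left'' argument that maximizes the harmonic-type sum over all sets obeying the given density bound. Specifically, fix $\beta > \BD(A)$. From the Fekete-type argument underlying the existence of $\BD(A)$ (equivalently, the representation of $\BD(A)$ as an infimum over window lengths), one extracts an $N_{0}\in \n$ such that $|A\cap [k,k+N]|\le \beta (N+1)$ for every $k\ge 1$ and every $N\ge N_{0}$. It will suffice to show that for every $\varepsilon>0$ one has $\sup_{k\ge 1}\frac{1}{\ln n}\sum_{x\in A\cap [k,nk]}\frac{1}{x}\le \beta +\varepsilon$ for all sufficiently large $n$; letting $\beta\searrow \BD(A)$ then gives $\lBD(A)\le \BD(A)$.

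Fix $k\ge 1$, $n\ge 1$, and enumerate $A\cap [k,nk]$ in increasing order as $a_{1}<a_{2}<\cdots <a_{m}$. The trivial lower bound $a_{j}\ge k+j-1$ already shows that $a_{j}-k+1\ge j$; hence whenever $j>N_{0}$ the bound from the setup applies to the interval $[k,a_{j}]$, yielding $j=|A\cap [k,a_{j}]|\le \beta (a_{j}-k+1)$, i.e.\ $a_{j}\ge j/\beta +k-1$. This is the Banach analog of ``pushing elements as far to the left as the density permits'': the enumeration of $A$ cannot grow more slowly than an arithmetic progression of step $1/\beta$ based at $k$.

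Plugging this lower bound into the harmonic sum, the first $N_{0}$ terms contribute at most $N_{0}/k\le N_{0}$, while the remaining terms are dominated by $\beta\sum_{j=N_{0}+1}^{m}\frac{1}{j+\beta (k-1)}$, which by comparison with $\int 1/x\,dx$ is at most $\beta\ln\!\bigl(\frac{m+\beta (k-1)}{N_{0}+\beta (k-1)}\bigr)$. Applying the density bound once more to $[k,nk]$ gives $m\le \beta (nk-k+1)\le \beta kn$, so the argument of the logarithm is at most $\frac{\beta k(n+1)}{\max (N_{0},\beta (k-1))}\le C(n+1)$ for a constant $C=C(\beta ,N_{0})$ independent of $k$. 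Dividing by $\ln n$ yields the uniform bound $\beta +O(1/\ln n)$, which is exactly what we need. The main obstacle is the uniformity in $k$: the case $k=1$, where $\beta (k-1)=0$ makes the raw tail bound degenerate, must be absorbed by the $N_{0}$ term in the denominator, and one should keep $\max (N_{0},\beta (k-1))$ throughout so that the implicit constant does not blow up as $k\to 1^{+}$.
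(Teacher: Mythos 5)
Your argument is correct, and it is essentially the approach the paper intends: the paper omits the proof, remarking only that it is ``analogous to the corresponding statement for upper log density,'' and your finitary rendering of that rearrangement idea --- bounding $a_j$ below by $j/\beta+k-1$ via the window bound $|A\cap[k,k+N]|\le\beta(N+1)$ and comparing the harmonic sum with $\int dx/x$ --- is exactly the right adaptation to the sliding-window (Banach) setting, with the uniformity in $k$ handled correctly. The only implicit point worth recording is that the window bound applies to $[k,nk]$ itself once $n\ge N_0+1$ (since then $k(n-1)\ge N_0$ for every $k\ge 1$), which is harmless because you let $n\to\infty$ in the end.
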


Finally, we will frequently make use of the following nonstandard formulation of Banach log density.
\begin{proposition}\label{nsubld}
If $A\subseteq \n$, then $\lBD(A)\geq \alpha$ if and only if for every $N>\n$, there is $k\in {}^*\n$ such that $$\st\left(\frac{\sum_{x\in {}^{\ast }\!{A}\cap [k,Nk]} \frac{1}{x}}{\ln N}\right)\geq \alpha.$$
\end{proposition}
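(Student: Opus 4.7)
The plan is to reduce the equivalence to the nonstandard characterization of the (existing) limit $\lim_{n\to\infty}\frac{g(n)}{\ln n}=\lBD(A)$, where $g(n)=\sup_{k\ge 1}\sum_{x\in A\cap[k,nk]}\frac{1}{x}$ as in the proof of the previous proposition. The main tool is the fact that for any infinite $N\in{}^*\n$, $\st\!\bigl(\tfrac{{}^*g(N)}{\ln N}\bigr)=\lBD(A)$, which follows immediately from the nonstandard characterization of standard limits.

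For the forward direction, assume $\lBD(A)\ge\alpha$ and fix any $N>\n$. By the observation above, $\st\!\bigl(\tfrac{{}^*g(N)}{\ln N}\bigr)\ge\alpha$. For each standard $n$, the definition of the supremum gives $k\in\n$ with $\sum_{x\in A\cap[k,nk]}\tfrac{1}{x}>g(n)-1$; by transfer, there is $k\in{}^*\n$ with
\[
\sum_{x\in {}^{\ast}\!{A}\cap[k,Nk]}\frac{1}{x}\ >\ {}^*g(N)-1.
\]
Dividing by the infinite quantity $\ln N$, the two sides become infinitely close, so the standard part of the left side is $\ge\alpha$. Note that no saturation is needed; the infinite denominator absorbs the slack.

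For the backward direction, assume the nonstandard condition holds, and fix any $N>\n$ together with a witness $k\in{}^*\n$. Since the displayed sum is bounded above by ${}^*g(N)$, we get
\[
\alpha\ \le\ \st\!\left(\frac{\sum_{x\in {}^{\ast}\!{A}\cap[k,Nk]}\frac{1}{x}}{\ln N}\right)\ \le\ \st\!\left(\frac{{}^*g(N)}{\ln N}\right)\ =\ \lBD(A),
\]
where the last equality again uses the nonstandard characterization of the limit. This yields $\lBD(A)\ge\alpha$, completing the proof.

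There is no serious obstacle here; the only subtlety is keeping track of the fact that $\ln N$ is infinite when $N$ is, which allows us to replace approximate suprema by exact suprema after taking standard parts and hence avoids any appeal to countable saturation. The argument is directly analogous to the well-known nonstandard characterization of Banach density, with $\tfrac{|{}^*A\cap[k,k+N]|}{N+1}$ replaced by $\tfrac{\sum_{x\in{}^*A\cap[k,Nk]}1/x}{\ln N}$ and with the role of Fekete's lemma played by Lemma \ref{analysislemma}.
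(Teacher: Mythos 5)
The paper states this proposition without proof, so there is no official argument to compare against; your proof is correct and is surely the intended one. The key observation that $\st\bigl(\tfrac{{}^*g(N)}{\ln N}\bigr)=\lBD(A)$ for every infinite $N$ is exactly the nonstandard characterization of the limit whose existence is guaranteed by the preceding proposition (via Lemma \ref{analysislemma}), and both directions then follow by transferring the two defining properties of the supremum $g(n)$ --- that it dominates every $k$-sum and is approached within $1$ by some $k$-sum --- with the slack $\tfrac{1}{\ln N}$ vanishing upon taking standard parts. Your remark that no saturation is needed is also accurate, since each direction only requires transferring a single first-order sentence.
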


\subsection{Multiplicative cuts}
\begin{definition}
An infinite initial segment $V$ of $\starn$
is a \emph{multiplicative cut} if $V\!\cdot\!V\subseteq V$.
\end{definition}

\noindent Note the following obvious facts:

\begin{itemize}
\item multiplicative cuts are also additive cuts, that is, they are closed under addition;
\item bounded multiplicative cuts must be external;
\item $\n$ is the smallest multiplicative cut. 
\end{itemize}

%

For $N\in {}^*\n\setminus \n$, we let
\begin{equation}\label{V_N}
V_N=\bigcap_{n\in\n}[1,\lfloor N^{1/n}\rfloor ].
\end{equation}
Then $V_N$ is the largest multiplicative cut
in $[1,N]$.

\begin{definition}
Suppose that $U$ and $V$ are infinite initial segments of $\starn\cup\{0\}$ and $\starn$ respectively.  We set:
\begin{enumerate}
\item $\ln V:=\{x\in\starn\cup\{0\}:\lfloor e^x\rfloor\in V\}.$
\item $e^U=\bigcup_{x\in U}[1, \lfloor e^x\rfloor ].$
\end{enumerate}
\end{definition}

\noindent It is straightforward to verify the following facts:
\begin{enumerate}
\item $V$ is a multiplicative cut if and only if $\ln V$ is an additive cut.
\item $e^U$ is a multiplicative cut if and only if $U$ is an additive cut.
\item If $U$ is an additive cut, then $\ln(e^U)=U$. 
\item If $V$ is a multiplicative cut, then $e^{\ln V}=V$.
\end{enumerate}

In the rest of this subsection, we fix $N\in \starn\setminus \n$ and a multipicative cut $V\subseteq [1,N]$.

\begin{definition}
For any $a,b\in\starn\setminus \n$, we declare $a\sim_V b$ if and only if $|\lfloor \ln a\rfloor-\lfloor \ln b\rfloor|\in \ln V$.
\end{definition}

Equivalently, if $a<b$, then $a\sim_V b$ if and only if $\lfloor \frac{b}{a}\rfloor \in V$.
Note that $\sim_V$ is an equivalence relation on $\starn$.  For $a\in \starn$, we set $[a]^V:=\{x\in\starn :a\sim_V x\}$.  We also set $\varphi_V:{}^*\n\to {}^*\n/\sim_V$ to denote the quotient map, that is, $\varphi_V(a):=[a]^V$.  If $V=\n$, we simply write $\varphi$ instead of $\varphi_V$.

The proof of the following proposition is straightforward.

\begin{proposition}
Fix $a\in \starn$. Then:
\begin{enumerate}
\item If $x,y\in [a]^V$ and $x<y$, then $[x,y]\subseteq [a]^V$.
\item $[a]^V=\bigcup_{x\in V}
\left[\lfloor ax^{-1}\rfloor,ax\right]$.
\end{enumerate}
\end{proposition}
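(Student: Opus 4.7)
The approach is to use the equivalent characterization stated just after the definition of $\sim_V$: for $a\leq b$, $a\sim_V b$ holds precisely when $\lfloor b/a\rfloor\in V$. Three closure properties of the multiplicative cut $V$ will do all the work, namely that $V$ is an initial segment of $\starn$, that $V$ is closed under doubling (since $2v\leq v\cdot v\in V$ for $v\geq 2$, and $2\in\n\subseteq V$ otherwise), and, as a consequence, that $V$ is closed under the successor map (since $v+1\leq 2v$ for $v\geq 1$).

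For part (1), suppose $x<z<y$ with $x,y\in[a]^V$. Transitivity of $\sim_V$ gives $x\sim_V y$, hence $\lfloor y/x\rfloor\in V$. Since $\lfloor z/x\rfloor\leq\lfloor y/x\rfloor$ and $V$ is an initial segment, $\lfloor z/x\rfloor\in V$, so $z\sim_V x\sim_V a$ and $z\in[a]^V$.

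For the forward inclusion in part (2), fix $b\in[a]^V$; I may assume $b\geq a$, the other case being symmetric. Then $\lfloor b/a\rfloor\in V$, so $x:=\lfloor b/a\rfloor+1$ lies in $V$ by successor closure, $b<ax$, and $b\geq a\geq\lfloor a/x\rfloor$, so $b\in[\lfloor a/x\rfloor,ax]$. For the reverse inclusion, fix $b\in[\lfloor a/x\rfloor,ax]$ with $x\in V$. If $b\geq a$, then $\lfloor b/a\rfloor\leq x\in V$ gives $\lfloor b/a\rfloor\in V$ by the initial segment property. If $b<a$, a short case split is needed: when $a/x\geq 2$, the inequality $\lfloor a/x\rfloor\geq a/(2x)$ combined with $b\geq\lfloor a/x\rfloor$ yields $\lfloor a/b\rfloor\leq 2x\in V$; when $a/x<2$ (so $a<2x\in V$), the initial segment property gives $a\in V$ directly, hence $\lfloor a/b\rfloor\leq a\in V$. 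The mildly technical handling of this last case, where the floor breaks the symmetry between $b/a$ and $a/b$, is the only real obstacle; every other step is immediate bookkeeping with the initial segment property and transitivity.
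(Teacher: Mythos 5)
Your proof is correct, and since the paper declares this proposition's proof ``straightforward'' and omits it, your argument supplies exactly the routine verification intended: everything reduces to the characterization $a\sim_V b\iff\lfloor b/a\rfloor\in V$ together with $V$ being an initial segment closed under doubling. The one genuinely delicate point --- the asymmetry introduced by the floor in the reverse inclusion of (2) when $b<a$ --- is the right thing to worry about, and your case split on $a/x\geq 2$ versus $a/x<2$ handles it correctly.
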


It is straightforward to show that, if $[a]^V=[a']^V$ and $[b]^V=[b']^V$, then $[ab]^V=[a'b']^V$.  (For instance, use that equality modulo an additive cut is a congruence relation with respect to addition on $\starn$.)  This allows us to set, for $a,b\in \starn$,  $[a]^V\cdot [b]^V:=[ab]^V$.  It is worth noting that this multiplication on equivalence classes satisfies cancellation:  if $[a]^V\cdot [b]^V=[a]^V\cdot [c]^V$, then $[b]^V=[c]^V$.

We can also order equivalence classes by setting $[a]^V<[b]^V$ if and only if $a<b$ and $a\not\sim_V b$.

\begin{proposition}
$(\starn/\sim_V,<)$ is a dense linear order.
\end{proposition}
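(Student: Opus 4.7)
The plan is to verify that $<$ is a strict linear order and is dense. That $<$ is well-defined on equivalence classes, irreflexive, and obeys trichotomy follows immediately from the corresponding properties of $<$ on $\starn$ together with the convexity of $\sim_V$-classes noted in part (1) of the preceding proposition: if $a<b$ and $a\not\sim_V b$, then every element of $[a]^V$ lies strictly below every element of $[b]^V$, so $<$ does not depend on representatives. For transitivity, suppose $[a]^V<[b]^V<[c]^V$ with representatives $a<b<c$. Then $\lfloor b/a\rfloor\notin V$ and $\lfloor c/a\rfloor\geq \lfloor b/a\rfloor$; since $V$ is an initial segment, $\lfloor c/a\rfloor\notin V$, hence $[a]^V<[c]^V$.

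The heart of the matter is density: given $[a]^V<[b]^V$, I must produce a class strictly between them. Set $M:=\lfloor b/a\rfloor$, so $M\notin V$, and let $K:=\lfloor\sqrt{M}\rfloor$ (which exists via transfer). Put $c:=aK$. The crucial claim is $K\notin V$. If $K$ were in $V$, then since $V$ is an additive cut containing $1$, we would have $K+1\in V$; then $V\cdot V\subseteq V$ gives $(K+1)^2\in V$; but $M<(K+1)^2$ and $V$ is an initial segment, so $M\in V$, a contradiction. With $K\notin V$ in hand, $\lfloor c/a\rfloor=K\notin V$ gives $[a]^V<[c]^V$. Since $K^2\leq M$ we have $b/c\geq M/K\geq K$, so $\lfloor b/c\rfloor\geq K$, which again lies outside $V$ by the initial-segment property, giving $[c]^V<[b]^V$. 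The strict inequalities $a<c<b$ follow because $M$ lies above every element of $V$, in particular above every standard natural, making both $M$ and $K$ infinite (so in particular $K>1$ and $aK<aM\leq b$).

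I expect the square-root trick to be the only substantive step; it is the unique place where the multiplicative (rather than merely additive) closure of $V$ is used, and it explains why the conclusion holds for multiplicative cuts but would fail for an arbitrary additive cut. Everything else is bookkeeping using only the order on $\starn$ and the initial-segment property.
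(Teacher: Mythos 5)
Your proof is correct and takes essentially the same route as the paper, which picks the geometric mean $c:=\lfloor\sqrt{ab}\rfloor$ and leaves the verification that $[a]^V<[c]^V<[b]^V$ as "readily verified"; your $c=a\lfloor\sqrt{\lfloor b/a\rfloor}\rfloor$ is the same midpoint up to a $\sim_V$-negligible factor. The square-root argument you spell out (if $K\in V$ then $(K+1)^2\in V$, forcing $M\in V$) is precisely the use of multiplicative closure the paper alludes to.
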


\begin{proof}
Suppose that $[a]^V<[b]^V$.  Let $c:=\lfloor \sqrt{ab}\rfloor$.  It is readily verified (using that $V$ is a multiplicative cut) that $[a]^V<[c]^V<[b]^V$.
\end{proof}

For any $k\in\starn$, we set $\mathcal{H}_{k,N,V}:=\varphi_V([k,Nk])$.  Once again, to simplify notation, if $V=\n$, we simply drop the $V$ and write $\mathcal{H}_{k,N}$ instead of $\mathcal{H}_{k,N,V}$.  We will often abuse notation and write $\varphi_V:[k,Nk]\to \mathcal{H}_{k,N,V}$, that is, we will let $\varphi_V$ also denote its restriction to $[k,Nk]$.

It is worth noting that if $a\in [k,Nk]$ is such that $ax>Nk$ or $\lfloor \frac{a}{x}\rfloor<k$ for some $x\in V$, then $[a]^V$ is not completely contained in $[k,Nk]$; for our purposes, the set of such exceptional $a$'s will become negligible in a sense to be made precise shortly.  In light of Proposition \ref{nsubld}, the spaces $\mathcal{H}_{k,N,V}$ will prove important when studying Banach log density.

\begin{remark}
For each $a\in [k,Nk]$, set
\begin{equation}\label{starndardize}
\Phi(a):=\st\left(\frac{\ln a-\ln k}{\ln N}\right).
\end{equation}
Then $\Phi:[k,Nk]\to [0,1]$ is easily seen to be a surjection. Moreover, $\Phi(a)=\Phi(b)$ if and only if $a\sim_{V_N}b$, 
where $V_N$ is defined as in (\ref{V_N}).
Hence, we obtain an order-preserving isomorphism $\Phi_{\#}:\mathcal{H}_{k,N,V_N}\to[0,1]$ given by \[\Phi_{\#}([a]^{V_N}):=\st\left(\frac{\ln a-\ln k}{\ln N}\right).\]
\end{remark}

\subsection{Loeb measure spaces}

For each internal set $A\subseteq [k,Nk]$, set
\[\nu(A):=\nu_{k,N}(A)=\st\left(\sum_{a\in A}\frac{1}{a\ln N}\right).\]
It is readily verified that $\nu$ is a finitely additive measure defined on the internal subsets of $[k,Nk]$, whence we obtain a Loeb measure space based on $[k,Nk]$, whose measure we continue to denote by $\nu=\nu_{k,N}$.  By Proposition \ref{nsubld}, for every $N>\n$, there is $k\in \starn$ such that $\lBD(A)=\nu_{k,N}({}^{\ast }\!{A}\cap [k,Nk])$.

Recall that, for $n\in \n$, we set $H_n=\sum_{k=1}^n \frac{1}{k}$.
\begin{proposition}
For any $k\leqslant a\leqslant b\leqslant Nk$, we have $$\nu([a,b])=\st\left(\frac{\ln b-\ln a}{\ln N}\right).$$  In particular, $\nu([k, k\sqrt{N}])=\nu([ k\sqrt{N},Nk])=\frac{1}{2}$.
\end{proposition}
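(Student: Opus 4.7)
The plan is to compute $\sum_{x=a}^{b}\frac{1}{x\ln N}$ directly using the harmonic-number asymptotic and then take standard parts. By transfer, for internal $a\leq b$,
\[\sum_{x\in [a,b]}\frac{1}{x}\ =\ H_b-H_{a-1},\]
and the standard identity $H_n=\ln n+\gamma+O(1/n)$ transfers to give the same for internal $n$. Consequently,
\[H_b-H_{a-1}\ =\ \ln b-\ln(a-1)+O\!\left(\tfrac{1}{a}\right)\ =\ \ln b-\ln a+O(1),\]
where I have absorbed $\ln\!\bigl(a/(a-1)\bigr)=O(1/a)$ into a uniformly bounded error. Dividing by $\ln N$ and noting that $\ln N$ is positive infinite (because $N>\n$), the error term $O(1)/\ln N$ is infinitesimal. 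Hence
\[\nu([a,b])\ =\ \st\!\left(\frac{H_b-H_{a-1}}{\ln N}\right)\ =\ \st\!\left(\frac{\ln b-\ln a}{\ln N}\right),\]
which is the first assertion.

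For the ``in particular'' statement, I would apply this with $a=k$ and $b=\lfloor k\sqrt{N}\rfloor$, so that
\[\ln b-\ln a\ =\ \tfrac{1}{2}\ln N+O\!\left(\tfrac{1}{k\sqrt{N}}\right),\]
and the standard part of the ratio with $\ln N$ equals $\tfrac12$; the computation on $[\lfloor k\sqrt{N}\rfloor,Nk]$ is symmetric (and one can also invoke finite additivity against $\nu([k,Nk])=1$, which itself follows by taking $a=k$, $b=Nk$ in the general formula).

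The only step requiring care is verifying that the harmonic asymptotic produces an error whose contribution after division by $\ln N$ is infinitesimal; this is immediate since the error is bounded (in fact $O(1/a)$) uniformly in $a\geq 1$, while $\ln N$ is infinite. There is no genuine obstacle in this proof: it is essentially a transfer of the elementary comparison $\sum_{a\le x\le b}\tfrac{1}{x}\sim \ln(b/a)$ combined with the fact that $\ln N$ dominates any finite additive error. The rounding in $\lfloor k\sqrt{N}\rfloor$ is likewise harmless because $\ln\!\bigl(k\sqrt{N}/\lfloor k\sqrt{N}\rfloor\bigr)$ is infinitesimal.
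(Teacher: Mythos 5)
Your proof is correct and follows essentially the same route as the paper: both arguments reduce $\nu([a,b])$ to $\st\bigl(\frac{H_b-H_{a-1}}{\ln N}\bigr)$ and then use the Euler--Mascheroni asymptotic $H_n\approx \ln n+\gamma$ to see that the discrepancy between $H_b-H_{a-1}$ and $\ln b-\ln a$ is finite, hence infinitesimal after dividing by the infinite quantity $\ln N$. The only cosmetic difference is that the paper splits the error into the two $\gamma$-terms plus $\ln\frac{a}{a-1}$ explicitly (assuming $a,b>\n$), while you absorb everything into a transferred uniform $O(1)$ bound, which also covers the finite-endpoint cases the paper dismisses as ``similar and easier.''
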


\begin{proof}
We assume that $a,b\in \starn\setminus \n$; the other cases are similar and easier.  We have
$$\frac{H_b-H_{a-1}}{\ln N}=\frac{(H_b-\ln b)+(\ln b-\ln a)+(\ln \frac{a}{a-1})+(\ln (a-1)-H_{a-1})}{\ln N}.$$  Since $a,b>\n$, we have $H_{a-1}- \ln(a-1),H_b- \ln b\approx \gamma$.  Also, $\ln \frac{a}{a-1}\approx 0$.  It follows that 
$$\nu([a,b])=\st\left(\sum_{x=a}^b \frac{1}{x\ln N}\right)=\st\left(\frac{H_b-H_{a-1}}{\ln N}\right)=\st\left(\frac{\ln b-\ln a}{\ln N}\right).$$
\end{proof}

\medskip

\begin{corollary}
Suppose that $a,b,c\in{}^*\n$ are such that $a,b,ac,bc\in [k,Nk]$.  Then $\nu([a,b])=\nu([ac,bc])$.
\end{corollary}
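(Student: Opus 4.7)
The plan is to invoke the preceding proposition twice and observe that the two resulting expressions coincide. Since the hypothesis guarantees $k\le a\le b\le Nk$ and $k\le ac\le bc\le Nk$, the preceding proposition applies directly to both intervals $[a,b]$ and $[ac,bc]$, giving
\[
\nu([a,b])=\st\!\left(\frac{\ln b-\ln a}{\ln N}\right)\quad\text{and}\quad \nu([ac,bc])=\st\!\left(\frac{\ln(bc)-\ln(ac)}{\ln N}\right).
\]

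The key algebraic observation is that $\ln(bc)-\ln(ac)=\ln b-\ln a$, so the two standard parts are literally equal (no approximation needed), and the corollary follows at once. This is really a restatement of the fact that the logarithmic measure is translation-invariant on the additive side, i.e.\ that Lebesgue measure on $[0,\log_N(Nk/k)]=[0,1]$ pulled back through $\log$ is scale-invariant in the multiplicative variable.

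There is essentially no obstacle here; the only subtlety is that the preceding proposition was stated under the assumption $a,b\in\starn\setminus\n$ in its proof (with the other cases deferred as ``similar and easier''), but since the statement itself is phrased for arbitrary $k\le a\le b\le Nk$, we may quote it without restriction. The reader may note, as an aside, that this corollary is precisely the infinitesimal multiplicative shift-invariance that will ultimately underwrite the fact that multiplication descends to a measure-preserving operation on the quotient $\mathcal{H}_{k,N,V}$ of the Loeb space.
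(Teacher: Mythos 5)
Your proof is correct and is exactly the argument the paper intends: the corollary is an immediate consequence of the preceding proposition applied to both intervals, since $\ln(bc)-\ln(ac)=\ln b-\ln a$. The paper gives no separate proof, treating this cancellation as self-evident, so your write-up matches the intended route.
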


In contrast to the previous corollary, note that, under the same assumptions, $\nu(c\cdot [a,b])\not=\nu([a,b])$ in general, that is, multiplication need not be measure preserving.  Indeed,
$$\nu(c\cdot [a,b])=\st\left(\sum_{x\in [a,b]}\frac{1}{cx\ln N}\right)=\st\left(\frac{1}{c}\sum_{x\in [a,b]}\frac{1}{x\ln N}\right).$$  We will shortly see that this problem vanishes when we pass to the quotient space $\mathcal{H}_{k,N,V}$.

In calculations pertaining to the quotient space $\mathcal{H}_{k,N,V}$, it will become useful to know how to approximate the measures of certain internal subsets of $[k,Nk]$.  First, let us establish some notation.  We call an interval $[a,b]\subseteq [k,Nk]$ \emph{big} if $\st(\frac{b}{a})>2$ (where, for the sake of this definition, the standard part of an infinite hyperreal is itself).  Now suppose that $C\subseteq [k,Nk]$ is internal and we write $C=\bigsqcup_{i\in I} [a_i,b_i]$, where the intervals $[a_i,b_i]$  are the internal connected components of $C$, that is, they are the maximal intervals contained in $C$.  (Note then that the set $I$ and the sequences $(a_i)$ and $(b_i)$ are all internal.)  We then say that $C$ \emph{has big components} if each connected component $[a_i,b_i]$ is big. 

For the proof of the next lemma, we will need to recall the following elementary estimates:  suppose that $r,s\in \n$ are such that $2\leq r\leq s$.  Then:
$$\ln(s+1)-\ln(r)\leq \sum_{i=r}^s \frac{1}{i}\leq \ln(s)-\ln(r-1).$$

\begin{lemma}\label{bigestimate}
Suppose that $C=\bigsqcup_{i\in I} [a_i,b_i]$ has big components and that $C\subseteq {}^*\n\setminus \n$.  Then $\nu(C)\approx \frac{1}{\ln N}\sum_{i\in I}(\ln(b_i)-\ln(a_i))$. 
\end{lemma}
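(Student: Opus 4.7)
My plan is to approximate $\nu(C)$ component by component, applying the elementary harmonic estimates recalled just above the lemma, and then to control the accumulated error via the bigness hypothesis.

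For each component $[a_i,b_i]$, the bounds $\ln(b_i+1)-\ln a_i\leq \sum_{x=a_i}^{b_i}\frac{1}{x}\leq \ln b_i-\ln(a_i-1)$ give
$$\sum_{x\in[a_i,b_i]}\frac{1}{x}=(\ln b_i-\ln a_i)+\epsilon_i\quad\text{with}\quad 0\leq \epsilon_i\leq \ln\!\frac{a_i}{a_i-1}\leq \frac{2}{a_i},$$
where the last inequality uses $a_i\geq 2$, which holds since $C\subseteq\starn\setminus\n$. Summing over $i\in I$ (an internal, possibly hyperfinite, index set) yields
$$\sum_{a\in C}\frac{1}{a}=\sum_{i\in I}(\ln b_i-\ln a_i)+E,\qquad\text{where}\qquad |E|\leq 2\sum_{i\in I}\frac{1}{a_i}.$$

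Next I would use the bigness hypothesis to bound $\sum_i 1/a_i$. Enumerating the components in increasing order as $a_1<b_1<a_2<b_2<\cdots<a_m<b_m$, disjointness forces $a_{i+1}>b_i$, while bigness, $\st(b_i/a_i)>2$, gives $b_i>2a_i$ (hence $b_i\geq 2a_i+1$ in the integers). By internal induction, $a_i\geq 2^{i-1}a_1$, so transfer yields
$$\sum_{i=1}^m\frac{1}{a_i}\leq \frac{1}{a_1}\sum_{i=1}^m\frac{1}{2^{i-1}}\leq \frac{2}{a_1}.$$
Since $a_1\in \starn\setminus\n$ is infinite, $2/a_1\approx 0$, and hence $E\approx 0$. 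Dividing the displayed identity for $\sum_{a\in C}\frac{1}{a}$ by the positive infinite hyperreal $\ln N$ preserves the infinitesimal error, giving
$$\sum_{a\in C}\frac{1}{a\ln N}\approx \frac{1}{\ln N}\sum_{i\in I}(\ln b_i-\ln a_i),$$
and taking standard parts on the left produces $\nu(C)$, yielding the desired conclusion.

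The main obstacle is that $I$ is merely an internal index set, so the accumulated error $E$ is a hyperfinite sum of infinitesimals and need not be infinitesimal a priori. The bigness hypothesis is precisely what rescues the argument, forcing the $a_i$ to grow geometrically and reducing the error to a geometric series controlled by the infinitesimal $2/a_1$; without bigness, the per-interval endpoint errors could aggregate into a non-negligible contribution.
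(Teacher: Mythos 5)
Your proof is correct, and while it opens the same way as the paper's (the per-component harmonic estimate $\ln(b_i+1)-\ln a_i\leq\sum_{x=a_i}^{b_i}\frac{1}{x}\leq\ln b_i-\ln(a_i-1)$), you aggregate the errors by a genuinely different mechanism. The paper bounds the \emph{relative} error of each component: bigness gives $\ln(b_i)-\ln(a_i)\geq\ln 2$, so the per-component error $\ln\frac{a_i}{a_i-1}\approx 0$ is at most $\epsilon(\ln(b_i)-\ln(a_i))$ for every standard $\epsilon>0$, whence the total error is at most $\epsilon H_N\approx\epsilon\ln N$; letting $\epsilon\to 0$ finishes the argument. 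You instead bound the \emph{absolute} error by $\sum_i 2/a_i$ and use bigness together with disjointness of the components to force $a_{i+1}>b_i>2a_i$, so that internal induction gives $a_i\geq 2^{i-1}a_1$ and the error telescopes into a geometric series $\leq 2/a_1\approx 0$. Your route yields the slightly stronger conclusion that $\sum_{a\in C}\frac{1}{a}$ and $\sum_i(\ln b_i-\ln a_i)$ differ by an actual infinitesimal (not merely by $o(\ln N)$), and it avoids the $\epsilon$-quantifier over the internal index set; the paper's route uses only bigness of each component (plus the trivial bound by $H_N$) and does not need the components' left endpoints to grow, so it transfers more directly to situations where one only controls each component's logarithmic length from below. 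Both uses of the hypothesis are legitimate, and your observation that bigness is exactly what prevents a hyperfinite accumulation of endpoint errors is the right diagnosis of why the lemma needs it.
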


\begin{proof}
Fix $i\in I$.  Note then that $$\ln(b_i)-\ln(a_i)\leq \sum_{n\in [a_i,b_i]} \frac{1}{n}\leq \ln(b_i)-\ln(a_i-1).$$  It follows that $$\frac{|(\ln(b_i)-\ln(a_i))-\sum_{n\in [a_i,b_i]}\frac{1}{n}|}{\ln(b_i)-\ln(a_i)}\leq \frac{\ln(a_i)-\ln(a_i-1)}{\ln 2}\approx 0.$$  Fix $\epsilon>0$.  We then have
\begin{alignat}{2}
|(\sum_{i\in I} (\ln(b_i)-\ln(a_i)))-(\sum_{i\in I}\sum_{n\in [a_i,b_i]}\frac{1}{n})|&\leq \sum_{i\in I}\epsilon \cdot (\ln(b_i)-\ln(a_i))\notag \\ \notag
								&\leq \epsilon \cdot \sum_{i\in I}\sum_{n=a_i+1}^{b_i} \frac{1}{n}\\ \notag
								&\leq \epsilon \cdot H_N.\notag
\end{alignat}

Therefore, we have
$$|\nu(C)- \frac{1}{\ln N}\sum_{i\in I}(\ln(b_i)-\ln(a_i))|\leq 2\epsilon \cdot \frac{H_N}{\ln N}\approx 2\epsilon.$$  Since $\epsilon>0$ was arbitrary, this yields the desired result.
\end{proof}

\subsection{Quotient measure spaces}

Let $V$ be a multiplicative cut contained in $[1,N]$. 
Via $\varphi:[k,Nk]\to\mathcal{H}_{k,N,V}$, the Loeb measure $\nu_{k,N}$ induces a measure $\frak{m}=\frak{m}_{k,N,V}$
on $\mathcal{H}_{k,N,V}$.  More precisely, a set $E\subseteq\mathcal{H}_{k,N,V}$ is $\mathfrak{m}_{k,N,V}$-measurable if and only if
$\varphi^{-1}(E)$ is $\nu_{k,N}$-measurable, in which case we set
\begin{equation}
\mathfrak{m}_{k,N,V}(E):=\nu_{k,N}(\varphi^{-1}(E)).
\end{equation}
Of course, $\mathfrak{m}_{k,N,V}$ is a probability measure on $\mathcal{H}_{k,N,V}$.  Since Loeb measures are complete, it follows that $\frak{m}_{k,N,V}$ is also complete.  As before, if $V=\n$, then we write $\mathfrak{m}_{k,N}$ instead of $\mathfrak{m}_{k,N,\n}$.

\begin{example}
If $V=V_N$, then the order-preserving isomorphism $\Phi_{\#}:\mathcal{H}_{k,N,V_N}\to [0,1]$ is also an isomorphism of measure spaces, where $[0,1]$ is equipped with the usual Lebesgue measure.
\end{example}

\begin{proposition}\label{internalmeasurable}
Suppose that $A\subseteq [k,Nk]$ is internal.  Then $\varphi(A)$ is $\frak{m}$-measurable.
\end{proposition}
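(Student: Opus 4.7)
The plan is to verify Loeb measurability by the standard internal-sandwich criterion: for every standard $\epsilon>0$ I will produce internal sets $C\subseteq\varphi^{-1}(\varphi(A))\subseteq D$ inside $[k,Nk]$ with $\nu(D\setminus C)<\epsilon$. The key structural observation is that $\varphi^{-1}(\varphi(A))$ is a monotone union of internal sets indexed by the external cut $V$; the sandwich will then be obtained by choosing a lower approximation $C$ coming from $V$ and an upper approximation $D$ obtained by an overflow argument that reaches just beyond $V$.

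Using the formula $[a]^V=\bigcup_{x\in V}[\lfloor ax^{-1}\rfloor, ax]$ from the earlier proposition, set
$$B_x\ :=\ [k,Nk]\cap\bigcup_{a\in A}\bigl[\lfloor ax^{-1}\rfloor,\ ax\bigr]\qquad (x\in\starn).$$
Internality of $A$ and $[k,Nk]$ makes $B_x$ an internal set uniformly in $x$, and $x\le y$ clearly implies $B_x\subseteq B_y$. A direct check gives
$$\varphi^{-1}(\varphi(A))\ =\ \bigcup_{x\in V}B_x.$$
Let $\alpha:=\sup_{x\in V}\nu(B_x)$, fix a standard $\epsilon>0$, and pick $x_0\in V$ with $\nu(B_{x_0})>\alpha-\epsilon/2$; set $C:=B_{x_0}$.

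For the upper approximation, consider
$$S\ :=\ \Bigl\{x\in\starn\ :\ \sum_{a\in B_x}\tfrac{1}{a\ln N}\ \le\ \alpha+\tfrac{\epsilon}{2}\Bigr\},$$
which is internal because the family $(B_x)_{x\in\starn}$ is internal. Every $x\in V$ has $\nu(B_x)\le\alpha$, hence lies in $S$, so $V\subseteq S$. Since bounded multiplicative cuts are external while $S$ is internal, overflow yields some $y\in S\setminus V$. Because $V$ is an initial segment, $y$ exceeds every element of $V$, and so by monotonicity $B_y\supseteq\bigcup_{x\in V}B_x=\varphi^{-1}(\varphi(A))$; set $D:=B_y$. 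Then
$$\nu(D\setminus C)\ =\ \nu(B_y)-\nu(B_{x_0})\ \le\ (\alpha+\tfrac{\epsilon}{2})-(\alpha-\tfrac{\epsilon}{2})\ =\ \epsilon,$$
so $\varphi^{-1}(\varphi(A))$ is sandwiched between internal sets of arbitrarily close Loeb measure and is therefore $\nu$-measurable, which by definition of $\mathfrak{m}$ is exactly the desired conclusion.

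The one place where genuine thought is required is the overflow step: one must recognise that the bound ``internal counting-measure of $B_x$ is $\le\alpha+\epsilon/2$'' defines an internal subset of $\starn$ that contains the external cut $V$, thereby forcing a witness $y>V$ at which $B_y$ is still within $\epsilon$ of the Loeb supremum $\alpha$. Once the internal family $x\mapsto B_x$ is set up correctly, monotonicity makes both containments and the final measure estimate completely routine.
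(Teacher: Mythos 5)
Your proof is correct and is essentially the argument the paper has in mind: the paper simply defers to \cite[Proposition 6.3]{DGJLLM}, whose proof is exactly this representation of $\varphi^{-1}(\varphi(A))$ as a monotone union of internal sets indexed by the external cut, followed by an overspill argument producing an internal upper approximation of nearly the supremal measure. You have merely carried out the same scheme in the multiplicative setting, which is the intended adaptation.
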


\begin{proof}
The proof is identical to that of \cite[Proposition 6.3]{DGJLLM}.
\end{proof}



Recall that if $(X,\mathcal B,\mu)$ and $(Y,\mathcal C,\nu)$ are probability spaces, then $T:X\to Y$ is said to be \emph{measure-preserving} if $T$ is measurable and $\mu(T^{-1}(A))=\nu(A)$ for all $A\in \mathcal C$.  If, additionally, $T^{-1}$ exists $\nu$-almost everywhere and is also measure-preserving, then we say that $T$ is an \emph{invertible measure-preserving} map.

Given $x:=[a]^V$, we can define a map $T_x:\mathcal{H}_{k,N,V}\to \mathcal{H}_{ka,N,V}$ by $T_x(e):=xe$.

\begin{proposition}\label{measpresgen}
For any $x:=[a]^V$, we have $T_x:\mathcal{H}_{k,N,V}\to \mathcal{H}_{ka,N,V}$ is an invertible measure-preserving map.
\end{proposition}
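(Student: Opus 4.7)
The plan is to verify that $T_x$ is a well-defined bijection, then establish measure preservation on the generating family of ``quotient intervals'' $\varphi_V([b_1,b_2])$ by a direct log-scale computation, and finally extend to all measurable sets via a standard measure-theoretic argument. Well-definedness of $T_x$ is immediate from the fact (noted just before the statement) that $[a]^V\cdot[b]^V:=[ab]^V$ is well-defined and satisfies cancellation, and injectivity follows from cancellation as well. For surjectivity, given $[c]^V\in\mathcal{H}_{ka,N,V}$ with $c\in[ka,Nka]$, I set $b:=\lfloor c/a\rfloor\in[k,Nk]$; then $ab\leq c<ab+a$, so $\lfloor c/(ab)\rfloor=1\in V$, whence $c\sim_V ab$ and $[c]^V=T_x([b]^V)$.

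For the core measure-theoretic computation, I fix $b_1,b_2\in[k,Nk]$ with $b_1\leq b_2$ and use $\mathfrak{m}_{k,N,V}(\varphi_V([b_1,b_2]))=\nu_{k,N}(\varphi_V^{-1}(\varphi_V([b_1,b_2])))$. This preimage is the union of $V$-classes meeting $[b_1,b_2]$, which by the interval structure of classes equals an interval $[\tilde b_1,\tilde b_2]$ with $\tilde b_i\sim_V b_i$. The key observation is that $\ln x/\ln N$ is infinitesimal for every $x\in V$: since $V\cdot V\subseteq V$ and $V\subseteq[1,N]$, one has $x^n\leq N$ for every standard $n$, so $\ln x/\ln N\leq 1/n$. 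Consequently $\ln\tilde b_i-\ln b_i$ is an infinitesimal multiple of $\ln N$, and the formula for $\nu$ on intervals yields $\mathfrak{m}_{k,N,V}(\varphi_V([b_1,b_2]))=\st((\ln b_2-\ln b_1)/\ln N)$. Since $T_x(\varphi_V([b_1,b_2]))=\varphi_V([ab_1,ab_2])$, the analogous calculation on the target side produces the same standard part, establishing measure preservation on quotient intervals.

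To extend, I use that every internal $A\subseteq[k,Nk]$ is a hyperfinite disjoint union of maximal intervals, and that the class-closure $\varphi_V^{-1}(\varphi_V(A))$ is a union of $V$-classes, each of which is an interval of ratio at least $2$ (since $2\in V$). After merging overlapping classes, I obtain a decomposition with big components to which Lemma \ref{bigestimate} applies (via an internal approximation), yielding $\mathfrak{m}_{k,N,V}(\varphi_V(A))=\mathfrak{m}_{ka,N,V}(T_x(\varphi_V(A)))$. Since by Proposition \ref{internalmeasurable} the sets $\varphi_V(A)$ with $A$ internal generate the $\mathfrak{m}$-measurable $\sigma$-algebra, a standard Caratheodory/$\pi$-$\lambda$ argument extends measure preservation to all measurable sets. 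The inverse $T_x^{-1}([c]^V):=[\lfloor c/a\rfloor]^V$ is well-defined and measure-preserving by the symmetric argument. The main obstacle is precisely this final extension step: $\varphi_V^{-1}(E)$ is typically external for a general measurable $E$, so one must argue via internal approximations and class-closures, relying on the key fact that the ``boundary'' of each $V$-class contributes only an infinitesimal fraction of $\ln N$ to the $\nu$-measure.
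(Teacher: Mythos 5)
Your proposal is correct in substance and uses the same technical core as the paper (the logarithmic measure formula on intervals, inner/outer approximation by internal sets, big components and Lemma \ref{bigestimate}), but it is organized differently: you prove measure preservation first on quotient intervals and then extend by a generating-class argument, whereas the paper works directly with an arbitrary measurable $E$, sandwiching $\varphi_V^{-1}(T_x(E))$ between the dilated internal approximations $F=\bigsqcup[aa_i,ab_i]$ and $G=\bigsqcup[ac_j,ad_j]$ of inner and outer internal approximations of $\varphi_V^{-1}(E)$; the content of the paper's Claim is exactly your observation that consecutive points $al,a(l+1)$ are $\sim_V$-equivalent, so the dilated \emph{full} intervals (rather than the sparse sets $a\cdot[a_i,b_i]$) capture the saturation. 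Two points in your write-up need repair. First, $\varphi_V^{-1}(\varphi_V([b_1,b_2]))$ is \emph{not} an interval $[\tilde b_1,\tilde b_2]$: since $V$ has no maximum, the saturation is an external, order-convex set with no endpoints; your measure computation survives because, by overspill applied to the internal set $\{x: \frac{1}{\ln N}\sum_{n\in[\lfloor b_1/x\rfloor,b_2x]}\frac1n<\alpha+\epsilon\}\supseteq V$, the saturation is contained in a single internal interval of measure $\leq\alpha+\epsilon$, and your key fact that $\ln x/\ln N\approx 0$ for $x\in V$ (i.e.\ $V\subseteq V_N$) does the rest. Second, the $\pi$-$\lambda$ framing is not quite right as stated, because $E\mapsto\mathfrak m_{ka,N,V}(T_x(E))$ is not known to be a measure until one knows $T_x(E)$ is measurable for all measurable $E$; the working extension mechanism is precisely the one you mention parenthetically at the end, namely the two-sided internal sandwich together with completeness of the quotient measure, which is the paper's route and handles measurability of $T_x(E)$ and the measure identity simultaneously. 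What your organization buys is a cleaner statement on intervals ($T_x(\varphi_V([b_1,b_2]))=\varphi_V([ab_1,ab_2])$) that avoids the slightly fiddly second inclusion of the paper's Claim; what the paper's buys is that no separate generation/extension step is needed.
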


\begin{proof}
We will only show:  if $E\subseteq \mathcal{H}_{k,N,V}$ is $\frak{m}_{k,N,V}$-measurable, then $T_x(E)$ is $\frak{m}_{ka,N,V}$-measurable and $\frak{m}_{ka,N,V}(T_x(E))=\frak{m}_{k,N,V}(E)$.  To finish the proof of the proposition, one would need to show that $T_x$ is measurable and measure-preserving; the proof of this fact is similar to what we will actually show but is a bit messier.  

Without loss of generality, we may suppose that $a\in {}^*\n\setminus \n$.  Indeed, if $a\in \n$, then $T_x$ is ``essentially'' the identity map on $\mathcal{H}_{k,N,V}$; see the discussion following the proof of the current proposition.

Without loss of generality, we may also assume that $\varphi_V^{-1}(E)\subseteq {}^*\n\setminus \n$.  Fix (standard) $\epsilon>0$.  Since $\varphi_V^{-1}(E)$ is Loeb measurable, we can find internal sets $C,D\subseteq [k,Nk]$ with $C\subseteq \varphi_V^{-1}(E)\subseteq D$ and with $\nu_{k,N}(D\setminus C)<\epsilon$.  Without loss of generality, we may assume that $D\subseteq {}^*\n\setminus \n$ and that both $C$ and $D$ have big components.  Indeed, we can arrange that $D$ has big components by deleting from $D$ all of the components that are not big; note that the remaining set is internal and still contains $\varphi_V^{-1}(E)$.  We can arrange that $C$ has big components by prolonging each connected component to three times the right endpoint (and merging intervals where necessary); the resulting set is still internal, is still contained in $\varphi_V^{-1}(E)$, and is readily verified to have big components.

Decompose $C=\bigsqcup_{i\in I}[a_i,b_i]$ and $D=\bigsqcup_{j\in J}[c_j,d_j]$  into their connected components.  Set $F:=\bigsqcup_{i\in I}[aa_i,ab_i]$ and $G:=\bigsqcup_{j\in J}[ac_j,ad_j]$.

\

\noindent \textbf{Claim:}  $F\subseteq \varphi_V^{-1}(T_x(E))\subseteq G$.

\noindent \textbf{Proof of Claim:}  First suppose that $p\in F$. Fix $l\in [a_i,b_i]$ such that $al\leq p\leq a(l+1)$.  Since $l\sim_V l+1$, we have $al\sim_V a(l+1)$, whence $[p]^V=[al]^V\in T_x(E)$.  Now suppose that $p\in \varphi_V^{-1}(T_x(E))$, say $[p]^V=[a]^V\cdot [d]^V$ with $[d]^V\in E$.  Take $y\in V$ such that $p\in [\lfloor \frac{ad}{y}\rfloor, ady]$.  Since $a>\n$, we have $a\lfloor \frac{d}{y}\rfloor \leq \lfloor \frac{ad}{y}\rfloor$, whence we have $p\in [a\lfloor \frac{d}{y}\rfloor,ady]$.  Write $p=ak+r$ with $k\in [\lfloor \frac{d}{y}\rfloor,dy]$ and $0\leq r<a$.  Since $[\lfloor \frac{d}{y}\rfloor,dy]\subseteq \varphi_V^{-1}(E)$, we have $k\in [c_j,d_j]$ for some $j\in J$.  Note that $d_j\notin \varphi_V^{-1}(E)$ as then $d_j+1\in \varphi_V^{-1}(E)\subseteq D$, a contradiction.  Thus $p=ak+r\leq a(d_j-1)+a=ad_j$, whence $p\in [ac_j,ad_j]$.  This completes the proof of the claim.

\

\noindent Since $F$ has big components and is contained in ${}^*\n\setminus \n$, by Lemma \ref{bigestimate} we have that 
$$\nu_{ka,N}(F)\approx \frac{1}{\ln N}\sum_{i\in I} (\ln(ab_i)-\ln(aa_i))=\frac{1}{\ln N}(\ln(b_i)-\ln(a_i))\approx \nu_{k,N}(C).$$  We conclude that $\nu_{ka,N}(G)=\nu_{k,N}(D)$.  For the same reason, we have that $\nu_{ka,N}(G)= \nu_{k,N}(D)$.  

%
			   
It follows that $\nu_{ka,N}(G\setminus F)<\epsilon$.  Since $\epsilon>0$ was arbitrary, this shows that $\varphi^{-1}_V(T_x(E))$ is Loeb measurable.  Moreover, $$|\nu_{ka,N}(\varphi^{-1}_V(T_x(E)))-\nu_{k,N}(\varphi^{-1}_V(E))|\leq |\nu_{ka,N}(G)-\nu_{k,N}(D)|+2\epsilon=2\epsilon;$$ since $\epsilon>0$ is arbitary, we have $\nu_{ka,N}(\varphi^{-1}(T_x(E)))=\nu_{k,N}(\varphi^{-1}(E))$, that is, $\frak{m}_{ka,N,V}(T_x(E))=\frak{m}_{k,N,V}(E)$.
\end{proof}

Now suppose that $x:=[a]^V$ is such that $a<V_N$, where $V_N$ is as in Equation (\ref{V_N}).  Then the ``inclusion'' mapping $i:[k,Nk]\to [ka,Nka]$ is defined on a conull set and is an invertible measure-preserving transformation.  In this way, we can identify the measure spaces $\mathcal{H}_{k,N,V}$ and $\mathcal{H}_{ka,N,V}$.  Combining this identification and Proposition \ref{measpresgen}, we obtain the following:

\begin{proposition}\label{measurepreserving}
For $x:=[a]^V$ with $a<V_N$, the map $T_x:\mathcal{H}_{k,N,V}\to \mathcal{H}_{k,N,V}$ is an invertible measure-preserving transformation.
\end{proposition}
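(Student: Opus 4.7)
The plan is to reduce directly to Proposition \ref{measpresgen} by making measure-theoretically precise the identification $\mathcal{H}_{k,N,V}\simeq\mathcal{H}_{ka,N,V}$ alluded to in the paragraph just above. The whole role of the hypothesis $a<V_N$ is to make this identification honest: since $V_N=\bigcap_n[1,\lfloor N^{1/n}\rfloor]$, the assumption $a<V_N$ forces $a\leq N^{1/n}$ for every standard $n$, hence $\st(\ln a/\ln N)=0$.

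With this in hand, I would first check that the overlap $[ka,Nk]$ is a conull subset of both $([k,Nk],\nu_{k,N})$ and $([ka,Nka],\nu_{ka,N})$: the interval measure formula gives $\nu_{k,N}([ka,Nk])=\st((\ln N-\ln a)/\ln N)=1$, and similarly on the other side. Moreover, the defining weight $1/(x\ln N)$ of the two Loeb measures depends only on $x$ and $N$, so the two measures literally coincide on every internal subset of $[ka,Nk]$, and therefore on every Loeb-measurable subset. Next I would pass to the quotients: every equivalence class meeting $[ka,Nk]$ represents the same element of the ambient quotient $\starn/\sim_V$ whether we view it in $\mathcal{H}_{k,N,V}$ or in $\mathcal{H}_{ka,N,V}$, giving a canonical identity correspondence $\iota_{\#}:\varphi_V([ka,Nk])\to\varphi_V([ka,Nk])$ between these conull subsets of $\mathcal{H}_{k,N,V}$ and $\mathcal{H}_{ka,N,V}$. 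Conullness is clear since the $\varphi_V$-preimages of the two complements sit inside $[k,ka)$ and $(Nk,Nka]$ respectively, both of which are null by the opening remark; the coincidence of the Loeb measures on the overlap then makes $\iota_{\#}$ a measure-preserving isomorphism.

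Composing $\iota_{\#}$ with the invertible measure-preserving $T_x:\mathcal{H}_{k,N,V}\to\mathcal{H}_{ka,N,V}$ from Proposition \ref{measpresgen} now yields an invertible measure-preserving transformation $\mathcal{H}_{k,N,V}\to\mathcal{H}_{k,N,V}$. Unwinding, on the conull set $\{[b]^V : ab\in[ka,Nk]\}$ — which really is conull, since $\nu_{k,N}([k,Nk/a])=\st((\ln N-\ln a)/\ln N)=1$ — the composition sends $[b]^V\mapsto[ab]^V$, which is precisely the intended $T_x$. The only real obstacle is the bookkeeping of null sets in establishing $\iota_{\#}$; the hypothesis $a<V_N$ is exactly what is needed to guarantee that the boundary pieces $[k,ka)$ and $(Nk,Nka]$ are null on their respective sides, so that the identification genuinely matches the total masses. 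Once $\iota_{\#}$ is in hand, combining it with Proposition \ref{measpresgen} is purely formal.
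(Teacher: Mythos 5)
Your proposal is correct and follows essentially the same route as the paper: the text immediately preceding the proposition asserts exactly this identification of $\mathcal{H}_{k,N,V}$ with $\mathcal{H}_{ka,N,V}$ via the conull ``inclusion'' map and then composes with Proposition \ref{measpresgen}. You have merely supplied the measure-theoretic bookkeeping (nullity of $[k,ka)$ and $(Nk,Nka]$, coincidence of the weights $1/(x\ln N)$ on the overlap) that the paper leaves implicit, and that bookkeeping is accurate.
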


\section{Geo-arithmetic progressions}

In this short section, we indicate how our results from the previous section can be used to obtain approximate geometric structure in sets of positive Banach log density.  As mentioned in the introduction, in an upcoming paper we show how stronger results can be deduced from Szemeredi's theorem and a logarithmic change of coordinates.

Let $x,a\in\starn$. If $n\in \n$, we say that $x$ is an \emph{$n$-approximation} of $a$ if $x/n<a<xn$.  If every element $x\in X$ is an $n$-approximation of some $a\in A$, we say that
$X$ is an \emph{$n$-approximate subset} of $A$.

For the convenience of the reader, we recall:

\begin{fact}[Furstenberg's Recurrence Theorem]\label{furstenberg}
Let $T:X\to X$ be a measure-preserving transformation on the probability space $(X,\mathcal{B},\mu)$.  Further suppose that $A\in \mathcal {B}$ satisfies $\mu(A)>0$ and $l\in \n$ is given.  Then there exists $n\in \n$ such that
\[\mu(A\cap T^{-n}(A)\cap T^{-2n}(A)\cap \cdots \cap T^{-ln}(A))>0.\]
\end{fact}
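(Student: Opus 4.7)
This is Furstenberg's celebrated multiple recurrence theorem, stated here as a \emph{Fact} because the authors intend to cite it rather than reprove it; the plan I would follow is Furstenberg's original ergodic-theoretic argument. The first move is to reduce to the ergodic case via the ergodic decomposition: since $\mu(A)>0$, the measure on ergodic components that assign positive measure to $A$ is itself positive, so it suffices to produce a positive-measure multiple intersection for almost every ergodic fiber and integrate.

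Once ergodicity is assumed, I would introduce the class of \emph{SZ systems} — those satisfying the stated conclusion for every measurable $A$ with $\mu(A)>0$ and every $l\in\n$ — and prove, by transfinite induction along the Furstenberg--Zimmer structure tower, that every ergodic probability-preserving system is SZ. The tower starts at the trivial one-point system (which is trivially SZ) and climbs through alternating compact (isometric) and weakly mixing extensions, with limit ordinals handled via inverse limits, eventually exhausting the given system. The three technical ingredients are: (i) the SZ property passes through compact extensions; (ii) the SZ property passes through weakly mixing extensions; and (iii) the SZ property is preserved under countable inverse limits.

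The main obstacle will be step (i). A compact extension is a bundle of homogeneous spaces of a compact group fiberwise over a base $Y$; the idea is to decompose $1_A$ into fiberwise almost periodic components in the Hilbert module $L^2(X)$ over $L^\infty(Y)$, and then combine the SZ property of the base (the inductive hypothesis) with a pigeonhole argument on fiber translates of these almost periodic functions to locate a syndetic set of $n$ witnessing $\mu(A\cap T^{-n}A\cap\cdots\cap T^{-ln}A)>0$. Step (ii) runs a van der Corput-type inequality for Hilbert-space-valued sequences to show that, over a weakly mixing extension, the multiple ergodic averages are controlled by the corresponding averages on the base, and then invokes the inductive hypothesis on $l$ together with a separate induction on the number of factors. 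Step (iii) is a standard $L^2$-approximation argument exploiting the density of $\bigcup_\alpha L^2(Y_\alpha)$ in $L^2(X)$, which lets one replace $1_A$ by a function measurable with respect to an earlier stage of the tower while only distorting the intersection's measure by an arbitrarily small amount.
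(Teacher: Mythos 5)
The paper offers no proof of this statement at all: it is recorded as a \emph{Fact} and used as a black box (this is Furstenberg's multiple recurrence theorem, i.e.\ the ergodic-theoretic form of Szemer\'edi's theorem), so there is no ``paper's proof'' to compare against. Your outline is the standard Furstenberg--Zimmer argument and its main contours are correct: reduce to the ergodic case, define SZ systems, climb the structure tower through compact and weakly mixing extensions with inverse limits at limit stages, with the compact-extension step carried by fiberwise almost periodic decomposition and the weakly mixing step by a van der Corput estimate. Two routine reductions are worth making explicit before the machinery applies, especially since the paper invokes the Fact on a quotient of a Loeb space: (1) the statement is for an arbitrary probability space and a not-necessarily-invertible $T$, whereas ergodic decomposition and the structure theory are developed for standard (Lebesgue) invertible systems, so you should first pass to the countably generated factor $\sigma$-algebra generated by $\{T^{-n}A : n\geq 0\}$, realize it as a standard system, and then pass to the natural (invertible) extension; (2) each of your steps (i)--(iii) is a substantial theorem in its own right, so as written this is a correct roadmap rather than a proof. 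As a roadmap it is faithful to the classical argument, and since the paper itself treats the result as known, nothing more is required.
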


%

\begin{theorem}\label{geometric}
Let $A\subseteq\n$ be such that $\lBD(A)>0$ and fix $l\in\n$. Then there exists $n\in \n$ such that,
for any $m\in \n$, there exists a geometric progression $G=\{ar^i:i=0,1,\ldots,l-1\}$ with
$a,r>m$ such that $G$ is an $n$-approximate subset of $A$.
\end{theorem}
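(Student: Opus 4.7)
My plan is to apply Furstenberg's multiple recurrence theorem (Fact~\ref{furstenberg}) in the quotient space $\mathcal{H}_{k,N,\mathbb{N}}$, and then use the transfer principle to pull the resulting multiplicative recurrence structure down to an honest geometric progression in $\mathbb{N}$. The key choice is to use the cut $V=\mathbb{N}$: equivalence modulo $V$ then means \emph{differing by a finite multiplicative factor}, which is precisely the notion of approximation appearing in the conclusion.

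First I would invoke Proposition~\ref{nsubld} to fix $N\in{}^*\mathbb{N}\setminus\mathbb{N}$ and $k\in{}^*\mathbb{N}$ with $\nu_{k,N}({}^*A\cap[k,Nk])\geq \lBD(A)>0$. Setting $V:=\mathbb{N}$ and $E:=\varphi_V({}^*A\cap[k,Nk])$, Proposition~\ref{internalmeasurable} shows $E$ is $\mathfrak{m}_{k,N,V}$-measurable; since $\varphi_V^{-1}(E)\supseteq{}^*A\cap[k,Nk]$, one has $\mathfrak{m}_{k,N,V}(E)\geq \lBD(A)>0$. Next I would choose an infinite $\rho\in V_N$, for instance $\rho:=\lfloor\ln N\rfloor$ (infinite, yet bounded by every $N^{1/n}$ with $n$ standard), and let $x:=[\rho]^V$. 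By Proposition~\ref{measurepreserving}, $T_x$ is an invertible measure-preserving transformation of $(\mathcal{H}_{k,N,V},\mathfrak{m}_{k,N,V})$.

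Applying Fact~\ref{furstenberg} with parameter $l-1$ then delivers some $n_F\in\mathbb{N}$ for which
\[
F:=E\cap T_x^{-n_F}(E)\cap T_x^{-2n_F}(E)\cap\cdots\cap T_x^{-(l-1)n_F}(E)
\]
has positive measure, hence is nonempty. Setting $r:=\rho^{n_F}\in V_N$, I would pick any $[d]^V\in F$ with representative $d\in[k,Nk]$. For each $j\in\{0,1,\ldots,l-1\}$, the relation $T_x^{jn_F}([d]^V)=[r^j d]^V\in E$ produces some $b_j\in{}^*A$ with $b_j\sim_V r^j d$, which for $V=\mathbb{N}$ says $b_j/(r^j d)\in[1/M_j,M_j]$ for some standard $M_j$. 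Taking $n:=\max_{0\leq j\leq l-1}M_j\in\mathbb{N}$, one observes that both $d$ and $r$ are infinite, so for every standard $m$ the internal first-order statement
\[
\exists a,s\ \bigl(a>m\,\wedge\,s>m\,\wedge\,\exists b_0,\ldots,b_{l-1}\in A\ \forall j\ (as^j/n<b_j<as^j\cdot n)\bigr)
\]
is witnessed in ${}^*\mathbb{N}$ by $(a,s,b_0,\ldots,b_{l-1}):=(d,r,b_0,\ldots,b_{l-1})$. Transfer then yields the same statement in $\mathbb{N}$ for each $m$, giving the desired geometric progression $G=\{ar^i:i=0,\ldots,l-1\}$ with $a,r>m$ that is an $n$-approximate subset of $A$.

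The main obstacle I anticipate is producing a \emph{single} standard $n$ that controls all the multiplicative discrepancies at once: this is exactly why the cut $V=\mathbb{N}$ is chosen, and why taking the maximum over only the finitely many indices $0\leq j\leq l-1$ suffices. A secondary point that must be verified is that each iterate $T_x^{jn_F}=T_{[\rho^{jn_F}]^V}$ is measure-preserving on $\mathcal{H}_{k,N,V}$; this follows from $\rho\in V_N$ together with the multiplicative closure of $V_N$, which guarantees $\rho^{jn_F}\in V_N$ for every standard $j$, so Proposition~\ref{measurepreserving} applies at each step.
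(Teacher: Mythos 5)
Your proposal is correct and follows essentially the same route as the paper's proof: pass to the quotient $\mathcal{H}_{k,N,\mathbb{N}}$, apply Furstenberg's recurrence theorem to $T_{[s]}$ for an infinite $s<V_N$, extract a standard bound $n$ from the finitely many multiplicative discrepancies, and transfer. The only cosmetic difference is that the paper starts the progression at a representative of the second recurrence point (guaranteeing an infinite initial term automatically), whereas you start at the base point $d$ and should note that $d$ may be chosen infinite since the finite elements of $[k,Nk]$ form a $\nu_{k,N}$-null set.
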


\begin{proof}
Set $\alpha:=\lBD(A)$.  Take $k,N\in \starn$ with $N>\n$ such that $\alpha=\nu_{k,N}({}^{\ast }\!{A}\cap [k,Nk])$.
Let $E=\varphi({}^{\ast }\!{A}\cap [k,Nk])\subseteq\mathcal{H}_{k,N}$. By Proposition \ref{internalmeasurable}, we have that $E$ is $\frak{m}_{k,N}$-measurable and that
$\mathfrak{m}_{k,N}(E)\geqslant\alpha$. Fix $s\in \starn$ with $\n<s<V_N$ and set $x:=[s]^\n$.  By Furstenberg's Recurrence Theorem applied to the transformation $T_x$ on $\mathcal{H}_{k,N}$ (which is applicable by Proposition \ref{measurepreserving}), we see that
$E$ contains a geometric progression
$\{cq^i:i=1,2,\ldots,l\}$; here, $q=x^k$ for some $k\in \n$.  Let $r:=s^k$.
Choose any $a\in\varphi^{-1}(cq)$. Then $a>\n$ and
$\varphi(ar^{i-1})=cq^i$.
Let $n_i=\min\{j\in \n:[\lfloor \frac{ar^i}{j}\rfloor,ar^ij]\cap\,{}^{\ast }\!{A}\not=\emptyset\}$.
Set $n=\max\{n_i:i=0,1,\ldots,l-1\}$. We now conclude that
there exists an $l$-term geometric progression in $[k,Nk]$
with infinite ratio and infinite initial element
such that every term in the progression is an $n$-approximation
of some element in ${}^{\ast }\!{A}\cap [k,Nk]$. The theorem follows by the transfer principle.
\end{proof}

We give two examples to show the necessity of some of the statements in the previous theorem.  First, we show that we can only expect to get approximate arithmetic progressions in general.

\begin{example}
Let $A$ be the set of all square-free numbers. Then by Fact \ref{comparingdensities} we have $\lBD(A)\geqslant\uld(A)\geqslant
\ld(A)>0$ but $A$ does not contain any $3$-term geometric progression.
\end{example}

The next example shows that we really do need the Banach log density to be positive.
\begin{example}
Let $\alpha<1$. Fix a $j$ such that $(j-1)/j>\alpha$. Let $u_0=2$,
$u_{i+1}>(ju_i)^3$, and set
\[A=\bigcup_{i=1}^{\infty}[u_i,ju_i].\]
(Observe that $\ud(A)>\alpha$ but $\lBD(A)=0$.) For any $n\in\n$, there exists an $m\in\n$ such that
there does not exist $3$-term geometric progression $G=\{a,ar,ar^2\}$ with $a,r>m$
 and $G$ is an $n$-approximate subset of $A$.
\end{example}

\begin{proof}
Let $m=n^3j$. Let $a,r>m$ and $G=\{a,ar,ar^2\}$ be a $3$-term geometric progression
such that $u_{i_1}/n\leqslant a\leqslant ju_{i_1}n$ and
$u_{i_2}/n\leqslant ar\leqslant ju_{i_2}n$.

If $i_1=i_2$, then we get $\frac{u_{i_1}}{n}\leq a,ar\leq ju_{i_1}n$, whence $r\leq n^2j=m$, a contradiction. So we can assume that $i_2>i_1$. Then
$\frac{u_{i_2}}{ju_{i_1}n^2}\leqslant r\leqslant\frac{ju_{i_2}n^2}{u_{i_1}}$. Hence, it is readily verified that
\[ju_{i_2}n<u_{i_2}\frac{u_{i_2}}{ju_{i_1}n^3}\leqslant ar^2
\leqslant ju_{i_2}\frac{ju_{i_2}n^3}{u_{i_1}}<u_{i_2+1}\frac{n^3}{ju_{i_2}u_{i_1}}
<u_{i_2+1}/n.\]
Therefore, $G$ is not an $n$-approximate subset of $A$.
\end{proof}

\section{Other densities\label{Section: Other densities}}

In this section, we introduce a family of densities on subsets of $\n$ for which the corresponding sets of positive measure in the quotient space contain arbitrarily long powers of arithmetic progressions.  Since many of the properties of these densities have proofs analogous to the case of logarithmic density, we allow ourselves to just state the main definitions and results and omit almost all proofs.

\begin{definition}
For any positive integer $m$ and any set $A\subseteq\n$ let
\[\BD_m(A):=\lim_{n\to \infty} \sup_{k\in\n}\frac{1}{mn}\sum_{x\in A\cap [k,(\lceil\!\sqrt[m]{k}\rceil +n)^m]}\frac{1}{x^{\frac{m-1}{m}}}.\]
\end{definition}

\noindent Clearly, $\BD_1(A)=\BD(A)$.

%
%

\begin{definition}\label{monadformpower}
Fix $m\in\n$, $N\in\starn\setminus\n$, and $k\in\starn$.
Let $U\subseteq [1,N]$ be an additive cut (for example, $U=\n$). Let
\[I_{k,N,m}:=[k,(\lceil\!\sqrt[m]{k}\rceil+N)^m].\] For any $a,b\in I_{k,N,m}$,
set $a\sim b$ if $|\sqrt[m]{a}-\sqrt[m]{b}|<u$ for some $u\in U$.
Let  \[[a]_m:=\{x\in I_{k,N,m}:x\sim a\}.\]
\end{definition}
Clearly, if $x,y\in [a]$ and $x<y$, then $[x,y]\subseteq [a]$.

\begin{proposition}
The relation $\sim$ is an equivalence relation.
\end{proposition}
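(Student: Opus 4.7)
The plan is to verify the three axioms of an equivalence relation directly, using only that $U$ is an additive cut of $\starn$ (in particular, that $U$ is nonempty, closed under addition, and downward closed).

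For reflexivity, given $a\in I_{k,N,m}$, I would observe that $|\sqrt[m]{a}-\sqrt[m]{a}|=0$, and since $U$ is a nonempty initial segment of $\starn$ (in fact, infinite), it contains some positive element $u$, so $0<u$ witnesses $a\sim a$. Symmetry is immediate from the symmetry of the absolute value: if $u\in U$ witnesses $a\sim b$, the same $u$ witnesses $b\sim a$.

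The only nontrivial axiom is transitivity. Suppose $a\sim b$ and $b\sim c$, witnessed by $u_1,u_2\in U$ with $|\sqrt[m]{a}-\sqrt[m]{b}|<u_1$ and $|\sqrt[m]{b}-\sqrt[m]{c}|<u_2$. The triangle inequality in the hyperreals gives
\[|\sqrt[m]{a}-\sqrt[m]{c}|\leq |\sqrt[m]{a}-\sqrt[m]{b}|+|\sqrt[m]{b}-\sqrt[m]{c}|<u_1+u_2.\]
Since $U$ is an additive cut, it is closed under addition, so $u_1+u_2\in U$, which witnesses $a\sim c$.

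There is no real obstacle here; the entire argument is the standard proof that the relation induced on a set by an ``$\epsilon$-closeness modulo an ideal of scales'' is an equivalence relation, and the only property of $U$ that is invoked is closure under addition. The formulation in Definition \ref{monadformpower} is designed precisely so that transitivity follows from this closure property, in parallel with the way that $\sim_V$ earlier in the paper was shown to be an equivalence relation using closure of $\ln V$ under addition.
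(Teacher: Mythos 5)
Your proof is correct; the paper explicitly omits the proof of this proposition (Section 4 states that proofs analogous to the logarithmic-density case are omitted), and your argument — reflexivity and symmetry being immediate, with transitivity following from the triangle inequality together with closure of the additive cut $U$ under addition — is exactly the intended standard verification.
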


The monad $[a]$ is the set $\left(\lceil\!\sqrt[m]{a}\rceil \pm U\right)^m$ where
\[\left(\lceil\!\sqrt[m]{a}\rceil \pm U\right)^m:=\left(\bigcup_{u\in U}
\left[\left(\lceil\!\sqrt[m]{a}\rceil-u\right)^m,\left(\lceil\!\sqrt[m]{a}\rceil+u\right)^m\right]
\right)\cap I_{k,N,m}.\]

\begin{definition}
Let $m,N,k,U$ be the same as in Definition \ref{monadformpower}. Let
\[\mathcal{G}_{k,N,m}=\{[a]:a\in I_{k,N,m}\}.\]
Let $\varphi(a)=[a]$ be the quotient map from $I_{k,N,m}$ to $\mathcal{G}_{k,N,m}$.
\end{definition}

For each internal set $A\subseteq [k,(\lceil\!\sqrt[m]{k}\rceil+N)^m]$, we set 
$$\nu(A):=\st\left(\frac{1}{mN}\sum_{a\in A}\frac{1}{a^{\frac{m-1}{m}}}\right).$$  As before, we can extend $\nu$ to the $\sigma$-algebra generated by the internal sets.

\begin{proposition}
Let $A\subseteq\n$ and $\alpha>0$. Then $\BD_m(A)\geq\alpha$ if and only if there exists an
$I_{k,N,m}$ such that $\nu(^*\!A\cap I_{k,N,m})\geq\alpha$.
\end{proposition}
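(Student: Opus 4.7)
The plan is to mirror the proof of Proposition \ref{nsubld} for Banach log density. Define, for $k,n\in\n$,
$$f_m(k,n) := \frac{1}{mn}\sum_{x\in A\cap [k,(\lceil\sqrt[m]{k}\rceil +n)^m]}\frac{1}{x^{(m-1)/m}},\qquad g_m(n) := \sup_{k\in\n} f_m(k,n),$$
so that $\BD_m(A) = \lim_{n\to\infty} g_m(n)$. As announced at the beginning of this section, the analog of Lemma \ref{analysislemma} goes through (see the last paragraph below), so this limit exists and equals $\inf_n g_m(n)$. Consequently, by transfer, for every $N\in\starn\setminus\n$ one has $\BD_m(A) = \st({}^*g_m(N)) = \st\bigl(\sup_{k\in\starn} {}^*f_m(k,N)\bigr)$, while $\nu({}^*\!A\cap I_{k,N,m}) = \st({}^*f_m(k,N))$ directly from the definition of $\nu$ in this section.

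For the forward direction, assume $\BD_m(A)\geq\alpha$ and fix any $N\in\starn\setminus\n$. Then $\sup_{k\in\starn} {}^*f_m(k,N)\geq\alpha - \epsilon$ for every standard $\epsilon>0$, so each internal set $S_j := \{k\in\starn : {}^*f_m(k,N) > \alpha - 1/j\}$ ($j\in\n$) is nonempty. By countable saturation applied to the decreasing chain $(S_j)_{j\in\n}$, there exists $k\in\bigcap_{j} S_j$, for which $\nu({}^*\!A\cap I_{k,N,m}) = \st({}^*f_m(k,N)) \geq \alpha$, as required.

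For the converse, suppose that $k\in\starn$ and $N\in\starn\setminus\n$ satisfy $\nu({}^*\!A\cap I_{k,N,m})\geq\alpha$, i.e., ${}^*f_m(k,N) \geq \alpha - \epsilon$ for every standard $\epsilon>0$. Then ${}^*g_m(N) = \sup_{k'\in\starn} {}^*f_m(k',N) \geq \alpha - \epsilon$ as well, and taking standard parts yields $\BD_m(A) = \st({}^*g_m(N)) \geq \alpha$.

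The only step that requires genuine work is the unproved analog of Lemma \ref{analysislemma} that underwrites the existence of the limit defining $\BD_m(A)$. For this, set $G_m(n) := \sup_{k\in\n} \sum_{x\in A\cap [k,(\lceil\sqrt[m]{k}\rceil+n)^m]} x^{-(m-1)/m}$ and verify the subadditivity $G_m(n+n') \leq G_m(n) + G_m(n')$ by splitting the interval $[k,(\lceil\sqrt[m]{k}\rceil+n+n')^m]$ at $(\lceil\sqrt[m]{k}\rceil+n)^m$ and bounding the right piece by $G_m(n')$ with shifted starting point. Fekete's lemma then gives $\lim_{n\to\infty} G_m(n)/n = \inf_n G_m(n)/n$, which after the normalization $1/(mn)$ in $f_m$ is exactly $m\cdot \BD_m(A)$. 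I expect this bookkeeping-heavy but otherwise routine check to be the only real obstacle, since the rest is a direct translation of the $\lBD$ argument.
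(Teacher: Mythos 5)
The paper gives no proof of this proposition to compare against: Section \ref{Section: Other densities} explicitly announces that the proofs, being analogous to the logarithmic-density case, are omitted. Judged on its own, your argument is correct and is the natural one, namely the analogue of the (also unproved) Proposition \ref{nsubld}. The two points that carry real content both check out. First, your replacement of Lemma \ref{analysislemma} by plain Fekete subadditivity is the right move, since $\BD_m$ is normalized additively in $n$ (like $\BD$, for which the paper itself invokes Fekete) rather than logarithmically (like $\lBD$); and the splitting does work: taking $k'=(\lceil\sqrt[m]{k}\rceil+n)^m+1$ one has $\lceil\sqrt[m]{k'}\rceil=\lceil\sqrt[m]{k}\rceil+n+1$, so the right-hand piece $[(\lceil\sqrt[m]{k}\rceil+n)^m+1,(\lceil\sqrt[m]{k}\rceil+n+n')^m]$ sits inside $[k',(\lceil\sqrt[m]{k'}\rceil+n')^m]$ and is bounded by $G_m(n')$. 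Second, the saturation step in the forward direction is genuinely needed (the internal supremum over $k\in\starn$ need not be attained, even up to infinitesimals, by a single $k$ without it), and you execute it correctly with the decreasing chain of nonempty internal sets $S_j$.
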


\begin{proposition}
Let $[a,b]\subseteq I_{k,N,m}$. Then
\[\nu([a,b])=\st\left(\frac{\sqrt[m]{b}-\sqrt[m]{a}}{H}\right).\]
Furthermore, if $c\in\starn$ is such that $(\lceil\sqrt[m]{b}\rceil+c)^m\in I_{k,N,m}$, then
\[\nu([(\lceil\sqrt[m]{a}\rceil+c)^m,(\lceil\sqrt[m]{b}\rceil+c)^m])
=\nu([a,b]).\]
\end{proposition}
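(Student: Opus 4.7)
The plan is to mirror the proof of the analogous statement for the logarithmic Loeb measure $\nu_{k,N}$ in Section 2.3, but with the harmonic-sum/logarithm comparison replaced by the corresponding comparison between $\sum_{x=a}^b x^{-(m-1)/m}$ and $m(b^{1/m}-a^{1/m})$. Concretely, since $t\mapsto t^{-(m-1)/m}$ is positive and decreasing, for $2\leq a\leq b$ one has
\[
\int_{a}^{b+1} x^{-(m-1)/m}\,dx \;\leq\; \sum_{x=a}^{b}\frac{1}{x^{(m-1)/m}} \;\leq\; \int_{a-1}^{b} x^{-(m-1)/m}\,dx,
\]
and the antiderivative is $m x^{1/m}$, so each bracketing integral equals $m(b^{1/m}-a^{1/m})$ plus a boundary term of the form $m\bigl((b+1)^{1/m}-b^{1/m}\bigr)$ or $m\bigl(a^{1/m}-(a-1)^{1/m}\bigr)$. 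By the mean value theorem applied to $t\mapsto t^{1/m}$, each such correction is bounded by $a^{-(m-1)/m}$, which is infinitesimal whenever $a>\n$.

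First I would reduce to the case $a,b\in\starn\setminus\n$; the contribution from any finite initial segment is $O(1)$, hence infinitesimal after dividing by $mN$. Combining the two integral bounds with the boundary estimate yields
\[
\biggl|\sum_{x=a}^{b}\frac{1}{x^{(m-1)/m}} \;-\; m\bigl(b^{1/m}-a^{1/m}\bigr)\biggr| \;\leq\; 2\,a^{-(m-1)/m} \;\approx\; 0,
\]
a fortiori infinitesimal after dividing by $mN$. Dividing through by $mN$ and taking standard parts then gives
\[
\nu([a,b]) \;=\; \st\!\left(\frac{b^{1/m}-a^{1/m}}{N}\right),
\]
which is the claimed formula (interpreting the symbol $H$ in the statement as $N$, apparently a typographic slip).

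The translation invariance then follows by applying this formula twice: the right-hand side becomes
\[
\st\!\left(\frac{(\lceil\sqrt[m]{b}\rceil+c)-(\lceil\sqrt[m]{a}\rceil+c)}{N}\right) \;=\; \st\!\left(\frac{\lceil\sqrt[m]{b}\rceil-\lceil\sqrt[m]{a}\rceil}{N}\right),
\]
with the $c$'s cancelling exactly. Since $\lceil\sqrt[m]{x}\rceil-\sqrt[m]{x}\in[0,1)$ and $N$ is infinite, this standard part coincides with $\st\bigl((\sqrt[m]{b}-\sqrt[m]{a})/N\bigr)=\nu([a,b])$.

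The main obstacle, as in the logarithmic case, is the careful bookkeeping of the boundary errors in the integral comparison; there the role was played by the classical estimate $H_n-\ln n\to\gamma$, whereas here one needs the parallel asymptotic $\sum_{x=1}^{n}x^{-(m-1)/m}=mn^{1/m}+O(1)$. Once this is established (and its validity under transfer for infinite $n$ is checked, along with the fact that the $O(1)$ error is dominated by $mN$), the rest of the argument is purely formal.
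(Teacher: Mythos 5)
Your proof is correct and is essentially the argument the paper intends: the paper omits the proof of this proposition, deferring to the analogy with the corresponding formula for $\nu_{k,N}([a,b])$ in Section 2, and your integral-comparison estimate for $\sum_{x=a}^{b}x^{-(m-1)/m}$ is the right replacement for the harmonic-number asymptotics used there, with the second claim following from the first exactly as you describe. Your reading of the undefined symbol $H$ as $N$ (the $m$-th-root length of $I_{k,N,m}$, matching the normalization $\frac{1}{mN}$ in the definition of $\nu$) is also the correct one.
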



\begin{definition}
For each set $E\subseteq\mathcal{G}_{k,N,m}$, we say that $E$ is $\mathfrak{m}$-measurable
if $\varphi^{-1}(E)$ is Loeb measurable, in which case we define the measure
\[\mathfrak{m}(E)=\nu(\varphi^{-1}(E)).\]
\end{definition}

\begin{theorem}
Let $U_N$ denote the largest additive cut in $[1,N]$ and fix $U<c<U_N$. For each $[a]\in\mathcal{G}_{k,N,m}$ set
\[T_c([a]):=[(\lceil\!\sqrt[m]{a}\rceil+c)^m].\] Then
$T_c$ is an $\mathfrak{m}$-measure preserving transformation on $\mathcal{G}_{k,N,m}$.
\end{theorem}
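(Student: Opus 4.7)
The plan is to adapt the strategy of Proposition \ref{measpresgen}, taking advantage of the fact that the key "shift preserves measure" identity for individual intervals is already supplied by the proposition immediately preceding this theorem. First I would verify that $T_c$ is well-defined: if $[a]=[a']$, then $|\sqrt[m]{a}-\sqrt[m]{a'}|<u$ for some $u\in U$, hence $|\lceil\sqrt[m]{a}\rceil-\lceil\sqrt[m]{a'}\rceil|\leq u+1\in U$; adding $c$ to both ceilings preserves this difference, so $T_c([a])=T_c([a'])$. Since $c<U_N$ forces $\st(c/N)=0$, the set of $a\in I_{k,N,m}$ for which $(\lceil\sqrt[m]{a}\rceil+c)^m$ exits $I_{k,N,m}$ has $\nu$-measure zero; as in the discussion preceding Proposition \ref{measurepreserving}, we may identify the shifted target with $\mathcal{G}_{k,N,m}$ itself and treat $T_c$ as a self-map defined on a conull subset.

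For measure preservation, following the template of Proposition \ref{measpresgen}, it suffices to show that for every $\mathfrak{m}$-measurable $E\subseteq\mathcal{G}_{k,N,m}$ the set $T_c(E)$ is measurable with $\mathfrak{m}(T_c(E))=\mathfrak{m}(E)$. Fix standard $\epsilon>0$ and pick internal $C\subseteq\varphi^{-1}(E)\subseteq D\subseteq I_{k,N,m}$ with $\nu(D\setminus C)<\epsilon$. I would pre-process so that every internal connected component $[a_i,b_i]$ of both $C$ and $D$ satisfies $\sqrt[m]{b_i}-\sqrt[m]{a_i}\geq 2$---the analog of the "big components" condition in $m$-th root coordinates. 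Components of $D$ failing this can be discarded with negligible measure loss, while components of $C$ can be extended to the right (merging as needed) to meet the threshold. Writing $C=\bigsqcup_i[a_i,b_i]$ and $D=\bigsqcup_j[c_j,d_j]$, set
\[F:=\bigsqcup_i\!\left[(\lceil\sqrt[m]{a_i}\rceil+c)^m,(\lceil\sqrt[m]{b_i}\rceil+c)^m\right],\qquad G:=\bigsqcup_j\!\left[(\lceil\sqrt[m]{c_j}\rceil+c)^m,(\lceil\sqrt[m]{d_j}\rceil+c)^m\right].\]
I would then establish $F\subseteq\varphi^{-1}(T_c(E))\subseteq G$ by unpacking the definitions of $\sim$ and $T_c$, in direct analogy with the claim proved inside Proposition \ref{measpresgen}: for $p\in F$ one identifies the unique component spawning the interval containing $p$ and observes that $\varphi(p)=T_c(\varphi(a_i))\in T_c(E)$, while for $p$ with $\varphi(p)=T_c([d])$ and $[d]\in E$ one locates the component of $D$ containing $d$ and uses $U$-equivalence to place $p$ inside the corresponding interval of $G$.

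By the preceding proposition applied componentwise, $\nu([(\lceil\sqrt[m]{a_i}\rceil+c)^m,(\lceil\sqrt[m]{b_i}\rceil+c)^m])=\nu([a_i,b_i])$ for each $i$, and the big-components condition ensures that at most standardly many components contribute appreciable measure, so the standard parts add cleanly to yield $\nu(F)=\nu(C)$ and $\nu(G)=\nu(D)$. Therefore $\nu(G\setminus F)<\epsilon$; letting $\epsilon\to 0$ gives Loeb measurability of $\varphi^{-1}(T_c(E))$ together with $\mathfrak{m}(T_c(E))=\mathfrak{m}(E)$. The main obstacle will be the sandwich step: the operation $a\mapsto(\lceil\sqrt[m]{a}\rceil+c)^m$ combines an integer-part rounding, an infinite additive shift, and an $m$-th power, and verifying its interaction with $\sim$ at the component boundaries---in particular, making sure the $\pm 1$ ceiling errors in the $m$-th root coordinate are absorbed by the $U$-cut (which works because $1\in U$ and $U$ is closed under addition, but only after passing through the nonlinear $m$-th power)---requires careful bookkeeping modeled on the most intricate part of the proof of Proposition \ref{measpresgen}.
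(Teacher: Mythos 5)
The paper deliberately omits the proof of this theorem (the section announces that the arguments are analogous to the logarithmic case and are left out), so the benchmark is the intended adaptation of Proposition \ref{measpresgen}, and your proposal follows exactly that template: well-definedness via absorption of the $\pm 1$ ceiling error into the additive cut $U$, identification of the shifted target with $\mathcal{G}_{k,N,m}$ because $c<U_N$ forces the overflow set to be null, and the internal sandwich $C\subseteq\varphi^{-1}(E)\subseteq D$ with big components in $m$-th-root coordinates pushed forward to $F\subseteq\varphi^{-1}(T_c(E))\subseteq G$. The choice of $\sqrt[m]{b_i}-\sqrt[m]{a_i}\geq 2$ as the bigness threshold is the right analogue, and your closing remark about the bookkeeping at component boundaries is exactly where the real work lies.

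There is, however, one step whose justification as written is not valid. You compute $\nu(F)=\nu(C)$ by "applying the preceding proposition componentwise" and then asserting that "the big-components condition ensures that at most standardly many components contribute appreciable measure, so the standard parts add cleanly." The index set $I$ is in general hyperfinite, and the standard part does not commute with hyperfinite sums: even if only standardly many components have non-infinitesimal measure, the hyperfinitely many infinitesimal contributions can sum to something appreciable, so summing the conclusions of the preceding proposition over $i\in I$ proves nothing. The correct repair is the $m$-th-power analogue of Lemma \ref{bigestimate}: show that for a big component one has
\[
\Bigl|\,m\bigl(\sqrt[m]{b_i}-\sqrt[m]{a_i}\bigr)-\sum_{x\in[a_i,b_i]}x^{-\frac{m-1}{m}}\Bigr|\leq\varepsilon_i\cdot m\bigl(\sqrt[m]{b_i}-\sqrt[m]{a_i}\bigr)
\]
with $\varepsilon_i\approx 0$ uniformly (this is where $\sqrt[m]{b_i}-\sqrt[m]{a_i}\geq 2$ enters, bounding the endpoint error relative to the component length), sum these \emph{internal} inequalities over $I$, divide by $mN$, and take the standard part once at the end; the shift-invariance of $\sum_i(\sqrt[m]{b_i}-\sqrt[m]{a_i})$ under $t\mapsto t+c$ then gives $\nu(F)\approx\nu(C)$ and $\nu(G)\approx\nu(D)$. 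With that substitution the argument goes through as you outline it.
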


Note that if $\mathfrak{m}(E)>0$, then $E$ contains arbitrarily long sequences of the form
$[a], [(\lceil\!\sqrt[m]{a}\rceil+d\d)^m], [(\lceil\!\sqrt[m]{a}\rceil+2d)^m],
\ldots,[(\lceil\!\sqrt[m]{a}\rceil+ld)^m]$, i.e., $E$ contains arbitrarily long
$m$-th powers of arithmetic progressions.  Thus, using the techniques of the previous section, if $A\subseteq \n$ satisfies $\BD_m(A)>0$, then in $A$ we can find approximations to arbitrarily long sequences of $m$-th powers of arithmetic progressions.
\section{Lebesgue Density Theorem}

In this section, we fix $N>\n$ and a multiplicative cut $V$ contained in $[1,N]$.  Suppose that $A\subseteq [k,Nk]$ is internal and set $X:=\varphi_V(A).$  For $x\in\mathcal{H}_{k,N,V}$ and $r>V$, we write $\frak{m}_{x,r,V}(X)$ to denote $\frak{m}_{b,r,V}(X\cap [x,\varphi(r)x])$ for any $b\in \varphi_V^{-1}(\{x\})$; since $V\subseteq V_N$, we see, by the discussion preceding Proposition \ref{measurepreserving}, that the definition of $\mathfrak{m}_{x,r,V}$ is independent of the choice of representative of $\varphi_V^{-1}(\{x\})$.  We then set
\[
\delta_+(x,X)=\liminf_{r>V}\mathfrak{m}_{x,r,V}(X),
\]
or, equivalently, to clarify the meaning of $\lim\inf$ in this setting:
\[
\delta_+(x,X)=\sup_{s>V}\inf_{V<r<s}\mathfrak{m}_{x,r,V}(X).
\]

One can define the notion of $\delta_-(x,X)$ in an analogous fashion.  We say that $x\in \mathcal{H}_{k,N,V}$ is a \emph{Lebesgue density point of $X$} if $\delta_+(x,X)=\delta_-(x,X)=1$.

Here is the version of the Lebesgue Density Theorem in our setting.  We model our proof after a proof of the classical Lebesgue density theorem given by Faure in \cite{faure}.

\begin{theorem}\label{ldt}
\ Let $A$ be an internal subset of $[k,Nk]$ and $X=\varphi_V(A).$ Then
$\mathfrak{m}_{k,N,V}$-almost every point in $X$ is a Lebesgue density point.
\end{theorem}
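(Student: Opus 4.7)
The plan is to model the proof after Faure's Vitali-type argument for the classical Lebesgue density theorem. By the symmetry between $\delta_+$ and $\delta_-$ and by decomposing along rationals $\alpha \in (0,1)$, it suffices to prove that the set $E_\alpha := \{x \in X : \delta_+(x,X) < \alpha\}$ has $\mathfrak{m}_{k,N,V}(E_\alpha) = 0$ for each such $\alpha$. Unpacking the definition of $\liminf$, each $x \in E_\alpha$ admits arbitrarily small $r > V$ with $\mathfrak{m}_{x,r,V}(X) < \alpha$, so the right-intervals $I_{x,r} := [x, \varphi_V(r) \cdot x]$ form a Vitali cover of $E_\alpha$ along which the local density of $X$ stays below $\alpha$. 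A direct computation from the logarithmic definition of $\nu_{k,N}$ gives the key identity $\mathfrak{m}_{k,N,V}(X \cap I_{x,r}) = \mathfrak{m}_{x,r,V}(X) \cdot \mathfrak{m}_{k,N,V}(I_{x,r})$, so on each cover interval the $\mathfrak{m}$-proportion of $X$ stays below $\alpha$.

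Assume for contradiction that $\mathfrak{m}(E_\alpha) > 0$, and fix $\epsilon > 0$. By Loeb regularity, pick an internal $D \subseteq [k,Nk]$ with $\varphi_V^{-1}(E_\alpha) \subseteq D$ modulo a null set and $\nu_{k,N}(D) \leq (1+\epsilon) \mathfrak{m}(E_\alpha)$, and restrict the cover to those $I_{x,r}$ whose preimages lie inside $D$; this is possible since each $r_x$ may be taken arbitrarily close to $V$. A standard one-dimensional greedy selection, executed on internal representatives in $[k,Nk]$ and using that each $V$-equivalence class is itself an interval in $[k,Nk]$, yields a countable pairwise disjoint subfamily $\{I_n\}$ with $\mathfrak{m}\bigl(E_\alpha \setminus \bigcup_n I_n\bigr) = 0$. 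Chaining estimates gives
\[
\mathfrak{m}(E_\alpha) \leq \sum_n \mathfrak{m}(X \cap I_n) \leq \alpha \sum_n \mathfrak{m}(I_n) \leq \alpha \cdot \nu_{k,N}(D) \leq \alpha(1+\epsilon)\mathfrak{m}(E_\alpha),
\]
and choosing $\epsilon$ so that $\alpha(1+\epsilon) < 1$ contradicts $\mathfrak{m}(E_\alpha) > 0$.

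The main obstacle is the Vitali selection on $\mathcal{H}_{k,N,V}$: the greedy procedure is manifestly non-internal (it picks countably many intervals via suprema over arbitrary subcollections), and one must verify that the standard residual-covering estimate goes through in this quotient-measure setting. My expectation is that this reduces to the classical one-dimensional Vitali argument by passing to preimages in $[k,Nk]$, exploiting that disjoint right-intervals in the quotient pull back to disjoint intervals in $[k,Nk]$ and that $\mathfrak{m}$-measures of right-intervals equal $\nu_{k,N}$-measures of their preimages up to null boundary classes. A secondary concern is outer regularity of $\mathfrak{m}_{k,N,V}$, which follows from Loeb regularity together with the pushforward definition, but one may wish to insist that $D$ have big components in the sense of Lemma \ref{bigestimate} so that the additive estimates relating $\mathfrak{m}$-measures of right-intervals to logarithmic lengths go through cleanly.
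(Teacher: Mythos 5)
Your plan has a structural gap that the paper's proof is specifically designed to avoid. The covering intervals $I_{x,r}=[x,\varphi_V(r)x]$ that witness $\delta_+(x,X)<\alpha$ must be taken with $r$ arbitrarily close to $V$ (the $\liminf$ is $\sup_{s>V}\inf_{V<r<s}$), and for such $r$ one has $\mathfrak{m}_{k,N,V}(I_{x,r})=\st(\ln r/\ln N)=0$: every interval in your fine cover is $\mathfrak{m}_{k,N,V}$-null. Consequently no countable pairwise disjoint subfamily $\{I_n\}$ can satisfy $\mathfrak{m}(E_\alpha\setminus\bigcup_n I_n)=0$ unless $\mathfrak{m}(E_\alpha)=0$ already, which is circular; and your chaining estimate degenerates to $0\le 0$. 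Passing to preimages in $[k,Nk]$ does not help, since the preimages are $\nu_{k,N}$-null for the same reason. The classical Vitali argument relies on every nondegenerate interval having positive measure that shrinks continuously; here the measure jumps to zero on all sufficiently small intervals, so the Vitali selection is not merely ``non-internal'' as you note, it is unusable. Your ``key identity'' is likewise vacuous in the relevant regime, since both sides vanish.

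The paper's proof (following Faure, whose argument notably does \emph{not} use Vitali covering) resolves this by keeping everything internal and deferring all standard parts to the end. It fixes $n$, sets $X_n=\{x\in X:\delta_+(x,X)<\tfrac{n}{n+1}\}$, squeezes $\varphi_V^{-1}(X)$ between internal $C\subseteq D$ and $\varphi_V^{-1}(X_n)$ inside an internal $D'$, and replaces the countable disjoint subfamily by the \emph{internal} set $C'$ of points $a\in C$ admitting some $b\ge 2$ with $[a,ba]\subseteq D'$ and internal average $\frac{1}{\ln b}\sum_{x\in C\cap[a,ba]}\frac1x<\frac{n+1}{n+2}$. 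The covering estimate is then obtained by a rising-sun/greedy exhaustion within each big component of $D'$, carried out on hyperfinite sums of $\frac1x$ rather than on standard measures of countably many intervals; only after summing over the (hyperfinitely many) components does one take standard parts, via Lemma \ref{bigestimate}. This internalization is exactly what substitutes for countable additivity where your approach needs it and fails to have it. If you want to salvage your outline, you would need to replace the countable Vitali selection with such an internal selection and express the sub-$\alpha$ density condition by internal sums before applying $\st$.
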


\begin{proof}
We only show that almost every point $x$ of $X$ satisfies $\delta_+(x,X)=1$.  Fix $n$ and set $X_n:=\{x\in X \ : \ \delta_+(x,X)<\frac{n}{n+1}\}$.  It suffices to show that $\frak{m}_{k,N,V}^*(X_n)=0$.  (Here, $\frak{m}_{k,N,V}^*$ denotes the outer measure.)  Fix $\epsilon>0$.  Take internal sets $C\subseteq D\subseteq [k,Nk]$ such that $C\subseteq \varphi_V^{-1}(X)\subseteq D\subseteq {}^*\n\setminus \n$ and $\nu(D\setminus C)<\epsilon$.  (In this proof, we write $\nu$ for $\nu_{k,N}$.)  Fix $D'\subseteq D$ internal such that $\varphi_V^{-1}(X_n)\subseteq D'$ and such that $\nu(D')<\nu^*(\varphi_V^{-1}(X_n))+\epsilon$.  We now set
$$C':=\{a\in C \ : \ (\exists b\geq 2)([a,ba]\subseteq D' \text{ and } \frac{1}{\ln b}\sum_{x\in C\cap [a,ba]} \frac{1}{x}<\frac{n+1}{n+2})\}.$$  Note that $C'$ is internal and $C'\subseteq C\cap D'$.

We first claim that $\varphi_V^{-1}(X_n)\cap C\subseteq C'$.  Fix $a\in \varphi_V^{-1}(X_n)\cap C$.  Since $[a]^V\subseteq \varphi_V^{-1}(X_n)\subseteq D'$, there is $c>V$ such that $[a,ca]\subseteq D'$.  Since $\delta_+(\varphi_V(a),X)<\frac{n}{n+1}$, there is $V<b<c$ such that $\nu(\varphi_V^{-1}(X))<\frac{n+1}{n+2}$.  It follows that
$$\frac{1}{\ln b}\sum_{x\in C\cap [a,ba]}\frac{1}{x}\approx \nu_{a,b}(C)\leq \nu_{a,b}(\varphi_V^{-1}(X))<\frac{n+1}{n+2},$$  whence we conclude that $a\in C'$.

Since $\varphi_V^{-1}(X_n)\subseteq C' \cup (D\setminus C)$, we get $\nu^*(\varphi_V^{-1}(X_n))\leq \nu(C')+\epsilon$, so $\nu(D')-\nu(C')\leq \nu(D')-\nu^*(\varphi^{-1}(X_n))+\epsilon<2\epsilon$.

Without loss of generality, we may suppose that $D'$ has big components.  Write $D':=\bigsqcup_i [a_i,b_i]$ into its components.   We now claim that $\frac{1}{\ln(b_i)-\ln(a_i)}\sum_{x\in C'\cap [a_i,b_i]}\frac{1}{x}\leq \frac{n+1}{n+2}$ for each $i$.  Fix $i$ and let $e_i\in [a_i+2,b_i+1]$ be maximal such that $\frac{1}{\ln(e_i-1)-\ln(a_i)}\sum_{x\in C'\cap [a_i,e_i-1]}\frac{1}{x}\leq \frac{n+1}{n+2}$.  We want to show that $e_i=b_i+1$.  Suppose, towards a contradiction, that $e_i\leq b_i$.  First suppose that $e_i\in C'$.  Take $b\geq 2$ such that $[e_i,be_i]\subseteq D'$ and $\frac{1}{\ln b}\sum_{x\in C\cap [e_i,be_i]}\frac{1}{x}\leq \frac{n+1}{n+2}$.  Then
\begin{alignat}{2}
\sum_{x\in C'\cap [a_i,be_i]}\frac{1}{x}&=\sum_{x\in C'\cap [a_i,e_i-1]}\frac{1}{x}+\sum_{x\in C'\cap [e_i,be_i]}\frac{1}{x}\notag \\ \notag
						            &\leq \frac{n+1}{n+2}((\ln(e_i-1)-\ln(a_i))+\ln b)\notag \\
						            &\leq \frac{n+1}{n+2}(\ln(be_i)-\ln(a_i)).\notag
\end{alignat}
Since $[e_i,be_i]\subseteq D'$, we have $be_i\leq b_i$, so $be_i+1\leq b_i+1$ contradicts the maximality of $e_i$.  We now suppose that $e_i\notin C'$.  Then
\begin{alignat}{2}
\sum_{x\in C'\cap [a_i,e_i]}\frac{1}{x}&=\sum_{x\in C'\cap [a_i,e_i-1]} \frac{1}{x} \notag \\ \notag
							&\leq \frac{n+1}{n+2}(\ln(e_i-1)-\ln(a_i))\notag \\
							&\leq \frac{n+1}{n+2}(\ln (e_i)-\ln(a_i)).\notag
\end{alignat}

\noindent Thus $e_i+1$ also works, contradicting the choice of $e_i$.

We now can calculate:
\begin{alignat}{2}
\nu(D')&\leq \nu(C')+2\epsilon \notag \\ \notag
               &\approx \frac{1}{\ln N} \sum_{x\in C'\cap [k,Nk]}\frac{1}{x}+2\epsilon \notag \\
               &= \frac{1}{\ln N}\sum_i \sum_{x\in C'\cap [a_i,b_i]}\frac{1}{x}+2\epsilon \notag \\
               &\leq \frac{1}{\ln N}\sum_i \frac{n+1}{n+2}(\ln(b_i)-\ln(a_i))+2\epsilon \notag \\
               &\approx \frac{n+1}{n+2}\cdot  \nu(D')+2\epsilon. \notag
\end{alignat}

\noindent The last step used that $D'$ has big components and is contained in ${}^*\n\setminus \n$.

We now conclude that $\nu^*(\varphi^{-1}(X_n))\leq\nu(D')\leq 2(n+2)\epsilon$.  Since $\epsilon$ was arbitrary (but $n$ is fixed), we get that $\nu^*(\varphi^{-1}(X_n))=0$, so $\frak{m}_{k,N,V}^*(X_n)=0$, as desired.
\end{proof}

\section{Productset phenomenon}

In this section, we use the Lebesgue Density Theorem for multiplicative cuts to obtain a multiplicative analog of Jin's sumset result from \cite{jin}.  First, we establish some notation.  For $u\in [1,N]$, set $u^{-1}:=\lfloor \frac{N}{u}\rfloor$.  Of course, this notion depends on $N$ and occasionally we will want to make this dependence explicit, in which case we write $u^{-1,N}$.  

The first goal of this section is to prove the following:

\begin{theorem}\label{involution}
There is a map $\Upsilon=\Upsilon_{N,V}:\mathcal{H}_{1,N,V}\to \mathcal{H}_{1,N,V}$ given by $\Upsilon(\varphi_V(u)):=\varphi_V(u^{-1})$.  Moreover, $\Upsilon$ is an invertible measure-preserving transformation satisfying $\Upsilon^{-1}=\Upsilon$.
\end{theorem}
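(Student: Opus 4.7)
The plan is to verify three assertions in sequence: well-definedness of $\Upsilon$ on $\sim_V$-equivalence classes, the involution identity $\Upsilon\circ\Upsilon=\operatorname{id}$ (which delivers invertibility with $\Upsilon^{-1}=\Upsilon$), and measure-preservation. All three rest on the fact that the map $u\mapsto N/u$ is a reflection on the logarithmic scale: $\ln(N/u)=\ln N-\ln u$.

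For well-definedness, the elementary estimate $0\le \ln(N/u)-\ln\lfloor N/u\rfloor\le \ln 2$ shows that $|\lfloor \ln u^{-1}\rfloor -(\lfloor \ln N\rfloor -\lfloor \ln u\rfloor)|$ is bounded by a standard constant. Hence if $u\sim_V v$, meaning $|\lfloor \ln u\rfloor -\lfloor \ln v\rfloor|\in \ln V$, then $|\lfloor \ln u^{-1}\rfloor -\lfloor \ln v^{-1}\rfloor|$ differs from this by a bounded amount and so also lies in $\ln V$, since $\ln V$ is an additive cut containing $\n$. For the involution identity, a direct computation shows $(u^{-1})^{-1}/u\in[1,2]$ for every $u\in[1,N]$: if $u\le N/2$ one bounds $(u^{-1})^{-1}\le N/\lfloor N/u\rfloor \le u/(1-u/N)\le 2u$, while if $u>N/2$ one has $u^{-1}=1$ and $(u^{-1})^{-1}=N\in[u,2u]$. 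Since $2\in\n\subseteq V$, this ratio bound yields $(u^{-1})^{-1}\sim_V u$, so $\Upsilon\circ\Upsilon=\operatorname{id}$; in particular $\Upsilon$ is a bijection of $\mathcal{H}_{1,N,V}$ with $\Upsilon^{-1}=\Upsilon$.

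For measure-preservation I would mirror the approximation strategy of Proposition~\ref{measpresgen}, but invoke the involution to reduce to a single inclusion. Fix a measurable set $E\subseteq\mathcal{H}_{1,N,V}$ and $\epsilon>0$, and approximate $\varphi_V^{-1}(E)$ from the inside by an internal set $C=\bigsqcup_i[a_i,b_i]\subseteq{}^*\n\setminus\n$ with big components such that $\nu_{1,N}(C)\ge \mathfrak{m}_{1,N,V}(E)-\epsilon$. Form the reciprocal image
\[F:=\bigsqcup_i\,[\lfloor N/b_i\rfloor,\lfloor N/a_i\rfloor].\]
Since $b_i/a_i\ge 2$ implies $\lfloor N/a_i\rfloor/\lfloor N/b_i\rfloor\approx b_i/a_i\ge 2$ for infinite endpoints, $F$ has big components, and Lemma~\ref{bigestimate} combined with the identity $\ln\lfloor N/a\rfloor -\ln\lfloor N/b\rfloor\approx \ln b-\ln a$ gives $\nu_{1,N}(F)\approx \nu_{1,N}(C)\ge \mathfrak{m}_{1,N,V}(E)-\epsilon$.

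The crux is the single inclusion $F\subseteq\varphi_V^{-1}(\Upsilon(E))$. Given $p\in[\lfloor N/b_i\rfloor,\lfloor N/a_i\rfloor]$, set $v:=\lfloor N/p\rfloor$: the bounds obtained in the involution step place $v$ in $[a_i,2b_i]$, so $v\sim_V w$ for some $w\in[a_i,b_i]\subseteq\varphi_V^{-1}(E)$, whence $v\in\varphi_V^{-1}(E)$ by $\sim_V$-saturation. The involution then gives $\Upsilon(\varphi_V(v))=\varphi_V(\lfloor N/\lfloor N/p\rfloor\rfloor)=\varphi_V(p)$, so $\varphi_V(p)\in\Upsilon(E)$. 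This yields $\mathfrak{m}_*(\Upsilon(E))\ge \nu_{1,N}(F)\ge \mathfrak{m}_{1,N,V}(E)-O(\epsilon)$, hence $\mathfrak{m}_*(\Upsilon(E))\ge \mathfrak{m}_{1,N,V}(E)$. Applying the same inequality to $E^c:=\mathcal{H}_{1,N,V}\setminus E$ and using $\Upsilon(E^c)=\Upsilon(E)^c$ (from $\Upsilon$ being a bijection onto itself) gives $\mathfrak{m}^*(\Upsilon(E))\le \mathfrak{m}_{1,N,V}(E)$. Together these show $\Upsilon(E)$ is measurable with $\mathfrak{m}_{1,N,V}(\Upsilon(E))=\mathfrak{m}_{1,N,V}(E)$. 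The main technical difficulty lies precisely in this inclusion step, where floor-function artifacts and $\sim_V$-fluctuations must be tracked at the boundaries of the components $[a_i,b_i]$; restricting to $C$ inside ${}^*\n\setminus\n$ with big components is what makes these boundary effects harmless.
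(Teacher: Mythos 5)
Your proposal is correct and its skeleton matches the paper's: the paper also splits the theorem into well-definedness (Lemma \ref{defined}), the involution identity (Lemma \ref{invo}), and measure-preservation (Lemma \ref{invpres}), proves the first two via the same elementary bound $(x^{-1})^{-1}/x\leq \frac{N}{N-x}\leq 2$ for $x\leq N/2$ (your logarithmic phrasing of well-definedness is equivalent to the paper's ratio estimate $\frac{u^{-1}}{v^{-1}}\leq 2\cdot\frac{v}{u}$), and proves the third by internal approximation with big components followed by passage to the reciprocal intervals $[b_i^{-1},a_i^{-1}]$, whose measure is computed via Lemma \ref{bigestimate} exactly as you indicate (this is the content of the paper's Lemma \ref{invappr}). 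The genuine difference is in how the measure-preservation estimate is closed: the paper uses a two-sided sandwich $C\subseteq\varphi_V^{-1}(E)\subseteq D$ and proves \emph{both} inclusions $C^{-1}\subseteq\varphi_V^{-1}(\Upsilon(E))\subseteq D^{-1}$, the upper one requiring a separate argument tracking the endpoints $c_j,d_j$ of the outer approximant; you prove only the lower inclusion and recover the upper bound by applying it to $E^c$ and using that $\Upsilon$ is a bijection, so that $\nu_*\bigl(\varphi_V^{-1}(\Upsilon(E))\bigr)\geq\mathfrak{m}(E)$ and $\nu_*\bigl(\varphi_V^{-1}(\Upsilon(E))^c\bigr)\geq 1-\mathfrak{m}(E)$ together force measurability and the correct value. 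That complementation trick is a clean simplification which the involution makes available. Two details you elide, both handled explicitly in the paper: the paper imposes \emph{separated} components on $C$ so that the reciprocal intervals are genuinely the components of $F$ (in your version consecutive reciprocal intervals can share at most an endpoint, which costs nothing in the logarithmic sum, but this deserves a remark); and the sum $\sum_i\bigl|\bigl(\ln\lfloor N/a_i\rfloor-\ln\lfloor N/b_i\rfloor\bigr)-(\ln b_i-\ln a_i)\bigr|$ over possibly hyperfinitely many components must be controlled \emph{relative} to $\sum_i(\ln b_i-\ln a_i)$, as in Lemma \ref{invappr}, rather than termwise.
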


We break the proof of Theorem \ref{involution} up into a series of lemmas.  We first prove that $\Upsilon$ is well-defined.

\begin{lemma}\label{defined}
Suppose that $u,v\in [1,\frac{N}{2}]$ satisfy $u\sim_V v$.  Then $u^{-1}\sim_V v^{-1}$.
\end{lemma}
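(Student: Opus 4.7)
My plan is to assume without loss of generality that $u\le v$ and show directly that $\lfloor u^{-1}/v^{-1}\rfloor\in V$, using the characterization that $a\sim_V b$ (with $a\le b$) iff $\lfloor b/a\rfloor\in V$. Under this assumption we have $v^{-1}=\lfloor N/v\rfloor\le \lfloor N/u\rfloor=u^{-1}$, so it is indeed $u^{-1}/v^{-1}$ that I must estimate. Let $m:=\lfloor v/u\rfloor\in V$, so $v<(m+1)u$.

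The key arithmetic step is to bound the ratio of the two floors in terms of $v/u$. From $\lfloor N/u\rfloor\le N/u$ and $\lfloor N/v\rfloor>N/v-1=(N-v)/v$, I get
\[
\frac{u^{-1}}{v^{-1}}<\frac{N/u}{(N-v)/v}=\frac{v}{u}\cdot\frac{N}{N-v}.
\]
The hypothesis $v\le N/2$ gives $N/(N-v)\le 2$, so $u^{-1}/v^{-1}<2v/u\le 2(m+1)$, and hence $\lfloor u^{-1}/v^{-1}\rfloor\le 2m+1$. Since $V$ is an additive cut containing $\n$, the element $2m+1$ lies in $V$, and since $V$ is an initial segment, $\lfloor u^{-1}/v^{-1}\rfloor\in V$. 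This is exactly $u^{-1}\sim_V v^{-1}$.

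The only mildly delicate point is checking that the bound $N/(N-v)\le 2$ is available, which is precisely what the hypothesis $v\le N/2$ (and symmetrically $u\le N/2$) is there to guarantee; without it the floor $\lfloor N/v\rfloor$ could be small enough that the rounding error in $v^{-1}$ dominates and inflates the ratio beyond $V$. I do not expect any real obstacle beyond keeping track of this cutoff and of the fact that $V$ is closed under the small additive perturbations (doubling, adding $1$) that the floor introduces.
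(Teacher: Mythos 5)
Your proposal is correct and follows essentially the same route as the paper: both arguments bound $\frac{u^{-1}}{v^{-1}}$ by $\frac{N}{N-v}\cdot\frac{v}{u}\leq 2\cdot\frac{v}{u}$ using the floor estimates and the hypothesis $v\leq \frac{N}{2}$. Your version merely makes explicit the final step (that $\lfloor u^{-1}/v^{-1}\rfloor\leq 2m+1\in V$ because $V$ is an additive cut and an initial segment), which the paper leaves implicit.
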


\begin{proof}
Without loss of generality, $u\leq v$.  We must show that $\lfloor \frac{u^{-1}}{v^{-1}}  \rfloor \in V$.  Write $u^{-1}:=\frac{N}{u}-\epsilon$ and $v^{-1}:=\frac{N}{v}-\delta$, where $\epsilon,\delta \in [0,1)$.  Then:
$$\frac{u^{-1}}{v^{-1}}=\frac{N-\epsilon u}{N-\delta v}\cdot \frac{v}{u}\leq \frac{N}{N-v}\cdot \frac{v}{u}\leq 2\cdot \frac{v}{u}.$$
\end{proof}

We next prove that $\Upsilon$ is an involution.

\begin{lemma}\label{invo}
Suppose that $x\in [1,\frac{N}{2}]$.  Then $x\sim_V (x^{-1})^{-1}$.
\end{lemma}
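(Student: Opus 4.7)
The plan is to unwind the definitions and obtain a very tight quantitative bound: in fact $(x^{-1})^{-1}$ will lie within a factor of $2$ of $x$, so the equivalence $x\sim_V (x^{-1})^{-1}$ will hold trivially, since $1\in V$.

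First I would set $y:=x^{-1}=\lfloor N/x\rfloor$ and $z:=y^{-1}=\lfloor N/y\rfloor$, and record the standard floor estimate
\[
\tfrac{N}{x}-1 \;<\; y \;\leq\; \tfrac{N}{x}.
\]
Inverting this double inequality and multiplying by $N$ gives
\[
x \;\leq\; \tfrac{N}{y} \;<\; \tfrac{Nx}{N-x},
\]
where the right inequality uses $N/x-1=(N-x)/x$. Taking the floor yields $x\leq z<\tfrac{Nx}{N-x}$, so in particular $z\geq x$.

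Next I would invoke the hypothesis $x\leq N/2$, which forces $N-x\geq N/2$ and hence $N/(N-x)\leq 2$. Combining with the previous step,
\[
1 \;\leq\; \tfrac{z}{x} \;<\; \tfrac{N}{N-x} \;\leq\; 2,
\]
so $\lfloor z/x\rfloor\in\{1\}\subseteq V$ (recall $\mathbb{N}$ is the smallest multiplicative cut, so every multiplicative cut contains $1$). By the equivalent formulation of $\sim_V$ noted right after its definition, this gives $x\sim_V z$, which is exactly the claim.

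I do not foresee any real obstacle: the entire argument is a one-line floor manipulation, and the condition $x\leq N/2$ is used only to turn the a priori bound $z/x<N/(N-x)$ into the explicit bound $z/x<2$. The main thing to be careful about is the direction of the inequalities when passing from $y\leq N/x$ to $N/y\geq x$ and from $y>N/x-1$ to $N/y<Nx/(N-x)$, but both are routine.
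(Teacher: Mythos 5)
Your proof is correct and follows essentially the same route as the paper's: both arguments amount to writing $x^{-1}=\frac{N}{x}-\delta$ with $\delta\in[0,1)$ (your floor inequalities), deducing $x\le (x^{-1})^{-1}$, and bounding $(x^{-1})^{-1}/x$ by $\frac{N}{N-x}\le 2$ using $x\le N/2$. The only cosmetic difference is that the paper carries the two error terms $\delta_1,\delta_2$ explicitly through the algebra, whereas you discard them earlier via the floor estimates.
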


\begin{proof}
Since $x^{-1}\leq \frac{N}{x}$, we have $x\leq \frac{N}{x^{-1}}$, so $x\leq (x^{-1})^{-1}$.  Write $(x^{-1})^{-1}=\frac{N}{x^{-1}}-\delta_1$ and $x^{-1}=\frac{N}{x}-\delta_2$, with $\delta_1,\delta_2\in [0,1)$.  We then have:
$$\frac{(x^{-1})^{-1}}{x}=\frac{\frac{N}{\frac{N}{x}-\delta_2}-\delta_1}{x}=\frac{Nx-\delta_1N+\delta_1\delta_2x}{x(N-\delta_2x)}\leq \frac{N}{N-x}\leq 2.$$
\end{proof}

Suppose that $A\subseteq [1,N]$ is internal and its decomposition into components is $A=\bigsqcup_{i\in I}[a_i,b_i]$.  We say that $A$ has \emph{separated components} if, whenever $[a_i,b_i]$ and $[a_j,b_j]$ are adjacent components with $a_j>b_i$, we have $a_j>2b_i$. 

\begin{lemma}
Suppose that $A$ has separated components and is contained in $\bigcap_{k\in \n}[1,\frac{N}{k})$.  Then, for any distinct $i,j\in I$, we have $[b_i^{-1},a_i^{-1}]\cap [b_j^{-1},a_j^{-1}]=\emptyset$. 
\end{lemma}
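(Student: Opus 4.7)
My plan is to reduce to the case where the component $[a_i,b_i]$ lies strictly to the left of $[a_j,b_j]$, i.e.\ $b_i<a_j$; by relabeling this is without loss of generality. Under the separated-components hypothesis, I would first argue that for any such pair, not merely adjacent ones, one in fact has $a_j>2b_i$. The idea is to iterate the condition along the chain of intermediate components: each step from one component to the next adjacent component at least doubles the right endpoint, so if there are $m\geq 1$ gaps between them then $a_j>2^m b_i\geq 2b_i$.

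The heart of the proof is to establish the single strict inequality $a_j^{-1}<b_i^{-1}$. Using $a_j>2b_i$,
\[
a_j^{-1}\ =\ \lfloor N/a_j\rfloor\ \leq\ N/a_j\ <\ \tfrac{N}{2b_i},
\]
while, by the standard estimate $\lfloor x\rfloor>x-1$,
\[
b_i^{-1}\ =\ \lfloor N/b_i\rfloor\ >\ \tfrac{N}{b_i}-1.
\]
It therefore suffices to show $\tfrac{N}{2b_i}\leq \tfrac{N}{b_i}-1$, which rearranges to $N\geq 2b_i$. This is precisely where the hypothesis $A\subseteq \bigcap_{k\in\n}[1,N/k)$ comes in: since $b_i\in A$ we have $b_i<N/k$ for every standard $k$, so in particular $2b_i<N$ with plenty of slack. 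Chaining the inequalities then gives $a_j^{-1}<b_i^{-1}$.

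To conclude, I would observe that the inversion $u\mapsto u^{-1}=\lfloor N/u\rfloor$ is order-reversing, so $b_j^{-1}\leq a_j^{-1}$ and $b_i^{-1}\leq a_i^{-1}$ within each component. Combined with the key inequality this yields
\[
b_j^{-1}\ \leq\ a_j^{-1}\ <\ b_i^{-1}\ \leq\ a_i^{-1},
\]
whence $[b_j^{-1},a_j^{-1}]\cap [b_i^{-1},a_i^{-1}]=\emptyset$. I expect the only real friction to be bookkeeping around the floor function, keeping strict versus non-strict inequalities straight; the hypothesis $A\subseteq \bigcap_{k\in\n}[1,N/k)$ is designed to make none of these comparisons tight, so no quantitative juggling is required.
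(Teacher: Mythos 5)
Your proof is correct, and it reaches the conclusion by a slightly more direct route than the paper's. The paper argues by contradiction from a hypothetical common point: supposing $x\in[b_i^{-1},a_i^{-1}]\cap[b_j^{-1},a_j^{-1}]$, it estimates $x^{-1}$ from both sides, getting $a_i\leq x^{-1}\leq \frac{Nb_i}{N-b_i}\leq 2b_i$ from the first membership (using $\frac{b_i}{N}\approx 0$) and $x^{-1}\geq a_j>2b_i$ from the second. You instead compare endpoints once and for all, showing $a_j^{-1}<b_i^{-1}$ via the chain $\lfloor N/a_j\rfloor\leq N/a_j<\frac{N}{2b_i}\leq \frac{N}{b_i}-1<\lfloor N/b_i\rfloor$, which requires only $2b_i\leq N$ (i.e.\ the $k=2$ instance of the hypothesis $A\subseteq\bigcap_{k\in\n}[1,\frac{N}{k})$) and avoids the double inversion entirely. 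Both arguments rest on exactly the same two inputs: the inequality $a_j>2b_i$ for arbitrary (not just adjacent) components in the correct order --- which you justify by the chain argument that the paper leaves implicit in its ``without loss of generality'' --- and the smallness of $b_i$ relative to $N$ to control the floor errors. So the difference is one of bookkeeping rather than of ideas, but your version is arguably the cleaner of the two, since it exhibits the images of the components as genuinely ordered intervals rather than merely pairwise disjoint sets.
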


\begin{proof}
Without loss of generality, assume that $2b_i<a_j$.  Suppose that $b_i^{-1}\leq x \leq a_i^{-1}$.  Then $\frac{N}{b_i}-\epsilon\leq x\leq \frac{N}{a_i}$ for some $\epsilon\in [0,1)$.  We then have $a_i\leq \frac{N}{x}\leq \frac{Nb_i}{N-b_i}$, so $a_i\leq x^{-1}\leq 2b_i$ since $\frac{b_i}{N}\approx 0$.  If $b_j^{-1}\leq x\leq a_j^{-1}$, then we would have $a_j\leq x^{-1}$, contradicting $2b_i<a_j$.
\end{proof}

For internal $A\subseteq [1,N]$ with decomposition $A=\bigsqcup_{i\in I} [a_i,b_i]$, we set $A^{-1}=\bigsqcup_{i\in I} [b_i^{-1},a_i^{-1}]$.  If $A$ has separated components and is contained in $\bigcap_{k\in \n}[1,\frac{N}{k})$, the preceding lemma tells us this definition of $A^{-1}$ is also its decomposition into components.

\begin{lemma}\label{invappr}
Suppose that $A\subseteq [1,N]$ is internal, has big and separated components, and is contained in $({}^*\n\setminus \n)\cap \bigcap_{k\in \n}[1,\frac{N}{k})$.  Then $A^{-1}$ has big components and $\nu(A)= \nu(A^{-1})$.
\end{lemma}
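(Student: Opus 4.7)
Write $A=\bigsqcup_{i\in I}[a_i,b_i]$ with the components enumerated in increasing order as $i=1,\dots,M$. Since $A$ has big components and lies in ${}^*\n\setminus\n$, Lemma \ref{bigestimate} gives
\[
\nu(A) \approx \frac{1}{\ln N}\sum_{i=1}^M(\ln b_i-\ln a_i).
\]
The plan is to verify that $A^{-1}=\bigsqcup_{i=1}^M[b_i^{-1},a_i^{-1}]$ (the union is disjoint by the preceding lemma) has big components and lies in ${}^*\n\setminus\n$, apply Lemma \ref{bigestimate} to it to obtain a parallel expression, and show that the two expressions differ by an infinitesimal.

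For $x\in A$, write $x^{-1}=N/x-\eta_x$ with $\eta_x\in[0,1)$. The algebraic identity
\[
\frac{a_i^{-1}}{b_i^{-1}} = \frac{b_i}{a_i}\cdot\frac{1-\eta_{a_i}a_i/N}{1-\eta_{b_i}b_i/N}
\]
together with $A\subseteq\bigcap_{k\in\n}[1,N/k)$ (making $a_i/N$ and $b_i/N$ infinitesimal) yields $\st(a_i^{-1}/b_i^{-1})=\st(b_i/a_i)>2$, so $A^{-1}$ has big components; similarly $b_i^{-1}=\lfloor N/b_i\rfloor$ is infinite, so $A^{-1}\subseteq{}^*\n\setminus\n$. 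Lemma \ref{bigestimate} then gives $\nu(A^{-1})\approx(\ln N)^{-1}\sum_i(\ln a_i^{-1}-\ln b_i^{-1})$. Taking logarithms in the displayed identity and using the elementary bound $|\ln(1-t)|\leq 2t$ for $0\leq t\leq 1/2$, the per-index discrepancy satisfies
\[
\bigl|(\ln a_i^{-1}-\ln b_i^{-1})-(\ln b_i-\ln a_i)\bigr| \leq \frac{4 b_i}{N}.
\]

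The main technical step is to show $\sum_{i=1}^M b_i/N$ is infinitesimal. By overspill applied to the internal set $\{k\in{}^*\n : A\subseteq[1,N/k)\}$, which contains $\n$, there is an infinite $k_0$ with $b_M<N/k_0$. The separation hypothesis gives $a_{i+1}>2b_i$, hence $b_{i+1}>2b_i$, so $b_i\leq b_M/2^{M-i}$ and $\sum_{i=1}^M b_i<2b_M\leq 2N/k_0$. Therefore the total discrepancy $4\sum_i b_i/N\leq 8/k_0$ is infinitesimal; dividing by the infinite $\ln N$ keeps it infinitesimal, so $\nu(A^{-1})\approx\nu(A)$.

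The delicate point is controlling the total rounding error across an internally-indexed, possibly hyperfinite, family of components: each per-index error is infinitesimal, but a naive sum need not be. The trick is to combine overspill, which converts the standard-indexed containment $A\subseteq\bigcap_{k\in\n}[1,N/k)$ into a uniform nonstandard bound $b_M<N/k_0$, with the geometric growth of the $b_i$ forced by the separation hypothesis; together these compress the aggregate error below any standard positive quantity.
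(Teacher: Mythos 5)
Your proof is correct, and its skeleton matches the paper's: you establish that $A^{-1}$ has big components via the same algebraic manipulation of $a_i^{-1}/b_i^{-1}$, apply Lemma \ref{bigestimate} to both $A$ and $A^{-1}$, and compare the two sums term by term. Where you genuinely diverge is in how the per-component rounding errors are aggregated. Writing $\alpha_i:=\ln b_i-\ln a_i$ and $\beta_i:=\ln a_i^{-1}-\ln b_i^{-1}$, the paper notes that each $|\alpha_i-\beta_i|$ is infinitesimal and that bigness forces $\alpha_i$ to exceed $\ln 2$, so the \emph{relative} error $|\alpha_i-\beta_i|/\alpha_i$ is infinitesimal uniformly over the internal index set; combined with $\sum_i\alpha_i\leq\ln N$ this gives $\sum_i|\alpha_i-\beta_i|\leq\epsilon\ln N$ for every standard $\epsilon>0$. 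You instead bound the \emph{absolute} error by $4b_i/N$ and control the hyperfinite sum $\sum_i b_i/N$ by pairing overspill (to get $b_M<N/k_0$ for some infinite $k_0$) with the geometric growth $b_{i+1}>2b_i$ that separation forces. Both arguments are valid. The paper's is shorter and leans on bigness, reserving separation solely for making $\bigsqcup_i[b_i^{-1},a_i^{-1}]$ the genuine component decomposition of $A^{-1}$; yours leans on separation for the error estimate as well, and in exchange delivers the slightly stronger conclusion that $\sum_i|\alpha_i-\beta_i|$ is itself infinitesimal rather than merely negligible compared to $\ln N$.
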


\begin{proof}
In order to show that $A^{-1}$ has big components, it suffices to show that if $[a,b]$ is big and $\frac{b}{N}$ is infinitesimal, then $[b^{-1},a^{-1}]$ is also big.  Write $a^{-1}=\frac{N}{a}-\epsilon$ and $b^{-1}=\frac{N}{b}-\delta$.  Then:
$$\frac{a^{-1}}{b^{-1}}=\frac{b}{a}\cdot \frac{N-\epsilon a}{N-\delta b}>\frac{b}{a}\cdot (1-\frac{a}{N}).$$  The quantity on the right hand side of the display is appreciably larger than $2$ since $\frac{b}{a}$ is appreciably larger than $2$ and $\frac{a}{N}$ is infinitesimal.  

We now must show that $\nu(A)= \nu(A^{-1})$.  Decompose $A=\bigsqcup_{i\in I}[a_i,b_i]$ into its components; then $[b_i^{-1},a_i^{-1}]$ are the components of $A^{-1}$.  By Lemma \ref{bigestimate} (which applies to $A^{-1}$ since $A\subseteq \bigcap_{k\in \n}[1,\frac{N}{k})$), we know that
$$\nu(A)\approx \frac{1}{\ln N}\sum_{i\in I}(\ln(b_i)-\ln(a_i))$$ and 
$$\nu(A^{-1})\approx \frac{1}{\ln N}\sum_{i\in I}(\ln(a_i^{-1})-\ln(b_i^{-1})).$$
For simplicity, set $\alpha_i:=\ln(b_i)-\ln(a_i)$ and $\beta_i:=\ln(a_i^{-1})-\ln(b_i^{-1})$.  Fix $i\in I$ and write $a_i^{-1}=\frac{N}{a_i}-\epsilon$ and $b_i^{-1}=\frac{N}{b_i}-\delta$.  Then $|\alpha_i-\beta_i|=|\ln(\frac{N-\epsilon a_i}{N-\delta b_i})|\approx 0$.  Since $A$ has big components, it follows that $\frac{|\alpha_i-\beta_i|}{\alpha_i}\approx 0$.  It follows that
$$|\frac{\sum_{i\in I}\alpha_i}{\ln N}-\frac{\sum_{i\in I}\beta_i}{\ln N}|\leq \frac{\sum_{i\in I}|\alpha_i-\beta_i|}{\ln N}\leq \frac{\sum_{i\in I}|\alpha_i-\beta_i|}{\sum_{i\in I}\alpha_i}\approx 0.$$  Putting everything together, we get $\nu(A)= \nu(A^{-1})$.
\end{proof}


\begin{lemma}\label{invpres}
Suppose that $E\subseteq \mathcal{H}_{1,N,V}$ is $\frak{m}_{1,N,V}$-measurable.  Then $\Upsilon(E)$ is $\frak{m}_{1,N,V}$-measurable and $\mathfrak{m}_{1,N,V}(\Upsilon(E))=\mathfrak{m}_{1,N,V}(E)$
\end{lemma}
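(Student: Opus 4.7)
The proof will parallel Proposition \ref{measpresgen}, with Lemma \ref{invappr} playing the role that Lemma \ref{bigestimate} played there. Set $F := \varphi_V^{-1}(E)$ and write $\iota : [1,N] \to [1,N]$ for the map $\iota(x) := x^{-1} = \lfloor N/x \rfloor$.

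The first step is to show that $\varphi_V^{-1}(\Upsilon(E)) = \iota^{-1}(F)$ modulo a $\nu$-null set. Unfolding definitions, $\varphi_V(x) \in \Upsilon(E)$ iff $x \sim_V u^{-1}$ for some $u \in F$; applying Lemmas \ref{defined} and \ref{invo} (each of which is valid on the conull set $[1, N/2]$, using that $\nu([1, k]) = \nu([N/k, N]) = 0$ for every standard $k$), this is equivalent to $x^{-1} \sim_V u$, and hence to $x^{-1} \in F$ since $F$ is a union of $\sim_V$-classes.

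Given $\epsilon > 0$, pick internal sets $C \subseteq F \subseteq D$ with $\nu_{1,N}(D \setminus C) < \epsilon$. As in the proof of Proposition \ref{measpresgen}, reduce to the case $C, D \subseteq ({}^{*}\n \setminus \n) \cap [M, N/M]$ for some infinite $M$ (justified because the initial and terminal segments are $\nu$-null) and arrange that both $C$ and $D$ have big components (delete non-big components of $D$ --- permissible since $F$ is a union of $\sim_V$-classes, each of which is multiplicatively big; enlarge each component of $C$ by tripling its right endpoint). The internal sets $C' := \iota^{-1}(C)$ and $D' := \iota^{-1}(D)$ satisfy $C' \subseteq \iota^{-1}(F) \subseteq D'$, and Lemma \ref{invappr}, applied to $C$ and to $D$, yields $\nu(C') \approx \nu(C)$ and $\nu(D') \approx \nu(D)$. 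Hence $\nu(D' \setminus C') \leq \nu(D') - \nu(C') \approx \nu(D) - \nu(C) < \epsilon$, and since $\epsilon$ was arbitrary, $\iota^{-1}(F)$ is Loeb measurable with $\nu(\iota^{-1}(F)) = \nu(F)$. Combining this with the first step gives that $\Upsilon(E)$ is $\mathfrak{m}_{1,N,V}$-measurable with $\mathfrak{m}_{1,N,V}(\Upsilon(E)) = \mathfrak{m}_{1,N,V}(E)$.

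The main obstacle is the ``separated components'' hypothesis of Lemma \ref{invappr}, since merging nearby components of $D$ to enforce separation can raise $\nu(D \setminus C)$ by a non-controllable amount when $D$ has many components. The most convenient workaround is to bypass separation entirely by reproving the relevant measure identity directly for $\iota^{-1}(A)$ rather than the paper's $A^{-1}$: one has $\iota^{-1}(A) = \bigsqcup_i [\lfloor N/(b_i+1) \rfloor + 1, \lfloor N/a_i \rfloor]$ as an \emph{automatically} disjoint union (pre-images of disjoint sets are disjoint), each piece remains big of multiplicative size approximately $b_i/a_i$, and Lemma \ref{bigestimate} then gives $\nu(\iota^{-1}(A)) \approx \frac{1}{\ln N} \sum_i (\ln b_i - \ln a_i) \approx \nu(A)$ without invoking separation.
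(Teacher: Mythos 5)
Your proposal is correct, and while it follows the same overall skeleton as the paper's proof (inner/outer internal approximations $C\subseteq\varphi_V^{-1}(E)\subseteq D$ with big components, inversion, then a $\nu$-preservation estimate), it differs in two substantive and worthwhile ways. First, you identify $\varphi_V^{-1}(\Upsilon(E))$ exactly (modulo a null set) as $\iota^{-1}(\varphi_V^{-1}(E))$ for the internal map $\iota(x)=\lfloor N/x\rfloor$, using Lemmas \ref{defined} and \ref{invo}; the paper instead proves a sandwich claim $C^{-1}\subseteq\varphi_V^{-1}(\Upsilon(E))\subseteq D^{-1}$ by a direct and somewhat delicate element-chasing argument. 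Your reformulation reduces that claim to the triviality that preimages preserve inclusions. Second, and more importantly, by working with the preimage $\iota^{-1}(A)=\bigsqcup_i[\lfloor N/(b_i+1)\rfloor+1,\lfloor N/a_i\rfloor]$ rather than the forward-image-style set $A^{-1}=\bigsqcup_i[b_i^{-1},a_i^{-1}]$, you get disjointness of the pieces for free and so never need the ``separated components'' hypothesis of Lemma \ref{invappr}. You are right to flag that hypothesis as the weak point: the paper's ``without loss of generality, $C,D$ have \dots separated components'' is not justified there, and naively merging components within a factor of $2$ of one another (e.g.\ $D=\bigsqcup_i[4^i,2\cdot4^i]$) can increase $\nu(D)$ appreciably, so your route genuinely repairs a soft spot rather than merely restyling the argument. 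The one step you compress --- passing from Lemma \ref{bigestimate}'s estimate $\frac{1}{\ln N}\sum_i\bigl(\ln\lfloor N/a_i\rfloor-\ln(\lfloor N/(b_i+1)\rfloor+1)\bigr)$ to $\frac{1}{\ln N}\sum_i(\ln b_i-\ln a_i)$ --- needs the observation that each term has infinitesimal error relative to its size (each term being at least about $\ln 2$ since the pieces are big), exactly as in the paper's proof of Lemma \ref{invappr}, so nothing new is required there.
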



\begin{proof}
Without loss of generality, $\varphi_V^{-1}(E)\subseteq ({}^*\n\setminus \n)\cap \bigcap_{k\in \n}[1,\frac{N}{k})$.  Fix $\epsilon>0$ and take internal sets $C\subseteq \varphi_V^{-1}(E)\subseteq D$ with $\nu_{1,N}(D\setminus C)<\epsilon$.  Without loss of generality, $C,D\subseteq ({}^*\n\setminus \n)\cap \bigcap_{k\in \n}[1,\frac{N}{k})$ and both $C$ and $D$ have big and separated components.  Decompose $C=\bigsqcup_{i\in I} [a_i,b_i]$ and $D=\bigsqcup_{j\in J} [c_j,d_j]$ into their measurable components.

\

\noindent \textbf{Claim:}  $C^{-1}\subseteq \varphi^{-1}_V(\Upsilon(E))\subseteq D^{-1}$.

\noindent \textbf{Proof of Claim:}  First suppose that $x\in [b_i^{-1},a_i^{-1}]$.  Write $b^{-1}=\frac{N}{b}-\delta$ for some $\delta \in[0,1)$.  Then $$a\leq \frac{N}{x}\leq \frac{Nb}{N-\delta b}\leq \frac{Nb}{N-b}\leq 2b.$$  Since $b\sim_V2b$, we have $\varphi_V(x^{-1})\in E$.  Since $a_i\geq 2$, we have $x\leq a_i^{-1}\leq \frac{N}{2}$, so $x\sim_V(x^{-1})^{-1}\in \varphi_V^{-1}(\Upsilon(E))$ and thus $x\in \varphi_V^{-1}(\Upsilon(E))$.  Now suppose that $x\in \varphi_V^{-1}(\Upsilon(E))$.  Then $x\sim_Vu^{-1}$ for some $u\in \varphi_V^{-1}(E)$.  Choose $j\in J$ such that $u\in [c_j,d_j]$.  Since $d_j+1\notin D$, we cannot have $u\sim_Vd_j$.  Now since $u,x^{-1}\in [1,\frac{N}{2}]$, we have $u\sim_V(u^{-1})^{-1}\sim_V x^{-1}$, whence $x^{-1}\leq d_j$.  Note that $x^{-1}<d_j$, else we contradict $d_j+1\notin D$.  It follows that $\frac{N}{x}\leq d_j$, so $\frac{N}{d_j}\leq x$, whence $d_j^{-1}\leq x$.  Similarly, $u\not\sim_Vc_j$, so $c_j\leq x^{-1}\leq \frac{N}{x}$.  It follows that $x\leq \frac{N}{c_j}$, so $x\leq c_j^{-1}$.  This completes the proof of the claim.

\

%
%

By Lemma \ref{invappr}, we have that $\nu(C^{-1})= \nu(C)$ and $\nu(D^{-1})= \nu(D)$.  Once again, it follows that $\varphi_V^{-1}(\Upsilon(E))$ is measurable and has the same measure as $E$.
\end{proof}

Note that Lemmas \ref{defined}, \ref{invo}, and \ref{invpres} together establish Theorem \ref{involution}.

\begin{lemma}\label{nsjin}
Suppose that $A$ is an internal subset of $[j,Nj]$ and $B$ is an internal subset of $[k,Nk]$.  Set $X=\varphi_{V}(A)$ and $Y=\varphi_{V}(B)$.  Suppose that $\mathfrak{m}_{j,N,V}(X)>0$ and $\mathfrak{m}_{k,N,V}(Y)>0$.  Then $XY$ contains
a non-empty interval in $\mathcal{H}_{jk,N^2,V}$.
\end{lemma}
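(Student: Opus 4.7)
The plan is to emulate the classical measure-theoretic proof of Jin's sumset theorem, with Theorem \ref{ldt} in place of the classical Lebesgue Density Theorem and a rescaled version of the inversion argument behind Theorem \ref{involution} in place of additive inversion. By Theorem \ref{ldt}, choose representatives $a_0 \in A$ and $b_0 \in B$ so that $[a_0]^V$ and $[b_0]^V$ are Lebesgue density points of $X$ and $Y$ respectively; after truncating $A$ and $B$ away from the endpoints $j, Nj$ and $k, Nk$ (which costs only infinitesimal $\mathfrak{m}$-measure), we may also assume $a_0$ and $b_0$ are large enough that the ratios $r/a_0$ and $r/b_0$ considered below are infinitesimal. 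Fix a standard $\epsilon \in (0, 1/4)$. Using that both $\delta_+$ and $\delta_-$ equal $1$ at each density point, choose $r \in \starn$ with $r > V$ such that
\[
\nu_{a_0/r,\,r^2}\bigl(A\cap[a_0/r,\,r a_0]\bigr) > 1-\epsilon \quad\text{and}\quad \nu_{b_0/r,\,r^2}\bigl(B\cap[b_0/r,\,r b_0]\bigr) > 1-\epsilon.
\]
Now pick any $s$ with $V < s \leq \sqrt{r}$. The candidate interval is $I := \{[c]^V : c \in [a_0 b_0,\, a_0 b_0 s]\} \subseteq \mathcal{H}_{jk,N^2,V}$, which is nondegenerate because $s \notin V$.

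Fix $c \in [a_0 b_0,\, a_0 b_0 s]$; I will produce $a \in A$ and $b \in B$ with $ab \sim_V c$, giving $[c]^V = [a]^V [b]^V \in XY$. Define the order-reversing map $\psi_c(a) := \lfloor c/a\rfloor$. For $a \in [a_0/r,\, r a_0]$ one checks $\psi_c(a) \in [c/(r a_0),\, cr/a_0] \subseteq [b_0/r,\, r s b_0] \subseteq [b_0/r,\, r^2 b_0]$, so $\psi_c$ lands in the region where we control $B$. The key technical claim is that $\psi_c$ is approximately measure-preserving between the source (with $\nu_{a_0/r, r^2}$) and its image (with $\nu_{c/(ra_0), r^2}$). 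This is a direct analogue of Lemma \ref{invappr}: on any big component $[a, a']$, $\log\lfloor c/a\rfloor - \log\lfloor c/a'\rfloor \approx \log(c/a) - \log(c/a') = \log a' - \log a$, with the floor-and-division errors absorbed into $a/c \leq r/b_0 \approx 0$, so the big-components sum of Lemma \ref{bigestimate} is preserved by $\psi_c$ up to infinitesimals.

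Since $\psi_c([a_0/r, r a_0]) \supseteq [s b_0/r, r b_0]$ and $[b_0/r, s b_0/r)$ has $\nu_{b_0/r, r^2}$-measure $(\log s)/(2\log r) \leq 1/4$, we have $\nu_{b_0/r, r^2}(B\cap [s b_0/r, r b_0]) \geq (1 - \epsilon) - 1/4$. Applying the measure-preservation of $\psi_c$, the set $\psi_c^{-1}(B) \cap [a_0/r, r a_0]$ has $\nu_{a_0/r, r^2}$-measure at least $(1-\epsilon) - 1/4$. Combined with $\nu_{a_0/r, r^2}(A \cap [a_0/r, r a_0]) > 1-\epsilon$, the intersection $A \cap \psi_c^{-1}(B) \cap [a_0/r, r a_0]$ has positive measure, hence is nonempty. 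For any $a$ in this intersection, set $b := \lfloor c/a\rfloor \in B$; then $ab \in [c - a, c]$, and since $a/c \leq r/b_0 \approx 0$, $ab \sim_V c$. Thus $[c]^V \in XY$; as $c$ was arbitrary in $[a_0 b_0, a_0 b_0 s]$, $I \subseteq XY$.

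The main obstacle is the measure-preservation claim for $\psi_c$: one must verify the big-components estimate of Lemma \ref{bigestimate} goes through uniformly in $c$ over the target interval, matching up the Loeb measures on two intervals of the same log-length. The argument is in spirit a rescaled variant of the computation in the proof of Theorem \ref{involution}, and it works because the floor-and-division error is uniformly controlled by $a/c \leq r/b_0$, which was arranged to be infinitesimal via the initial choice of representatives $a_0, b_0$.
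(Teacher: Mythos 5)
Your argument is correct in substance and is at heart the same proof as the paper's: pick Lebesgue density points, invert one of the two sets multiplicatively, and pigeonhole two sets of total measure greater than $1$ inside a common interval. The difference is organizational. The paper pushes everything into the quotient space $\mathcal{H}_{1,r,V}$: it forms $E_X,E_Y\subseteq\varphi_V([1,r])$ of measure $>2/3$ using $T_x$, $T_{y/r}$ and the already-established involution $\Upsilon_r$ of Theorem \ref{involution}, and then handles the entire target interval at once by shifting $E_X$ by a single $s<r^{1/3}$ and intersecting with the \emph{fixed} set $E_Y$. You instead work pointwise in $c$ over the target interval and, for each $c$, invoke a measure-preservation statement for $\psi_c(a)=\lfloor c/a\rfloor$ between two subintervals of $\starn$. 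That statement is not literally an instance of Lemma \ref{invappr} (which concerns $u\mapsto\lfloor N/u\rfloor$ on $[1,N]$), so you are obliged to re-prove it; your sketch of why it works (an isometry in log coordinates, with floor errors controlled by $a/c\approx 0$ and absorbed via Lemma \ref{bigestimate}) is the right one, but note two points of care. First, your claim that $\psi_c([a_0/r,ra_0])\supseteq[sb_0/r,rb_0]$ is false pointwise: when $a<\sqrt{c}$ the map $a\mapsto\lfloor c/a\rfloor$ skips integers, so $\psi_c^{-1}(\{b\})$ can be empty; what you actually need, and what is true, is the statement about preimages of intervals, $\psi_c^{-1}([\beta,\beta'])=(c/(\beta'+1),c/\beta]\cap\starn$, whose normalized log-length is infinitely close to that of $[\beta,\beta']$ when the component is big. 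Second, to apply the big-components estimate you must first replace $B$ (or rather $\varphi_V^{-1}(Y)$) on $[sb_0/r,rb_0]$ by an inner internal approximation with big components; this is possible precisely because $\varphi_V^{-1}(Y)$ is a union of monads, exactly as in the proof of Proposition \ref{measpresgen}. With those two repairs, and the endpoint truncation you already flag (needed both so that $[a_0/r,ra_0]\subseteq[j,Nj]$ and so that $r/b_0\approx 0$ and $a_0b_0s\le N^2jk$), your proof goes through; the paper's route is shorter only because it reuses Theorem \ref{involution} as a black box rather than redoing the inversion estimate for a family of maps.
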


\begin{proof}
Let $x\in X$ and $y\in Y$ be Lebesgue density points of $X$ and $Y$
respectively. \ Then there exists $r>V$ such that
\[
\mathfrak{m}_{x,r,V}(X\cap\lbrack x,xr])>\frac{2}{3}
\]
and
\[
\mathfrak{m}_{\frac{y}{r},r,V}(Y\cap\lbrack \frac{y}{r},y])>\frac{2}{3}.
\]

\noindent Here, and in the rest of this proof, $\frac{y}{r}$ denotes $\varphi_V(\lfloor \frac{a}{r}\rfloor)$ for any $a\in \varphi_V^{-1}(\{y\})$.  We now set $$E_{X}:=\left\{  u\in\varphi_V\left(  \lbrack1,r]\right)  :ux\in
X\right\}  $$ and $$E_{Y}:=\left\{  v\in\varphi_V\left(  \lbrack1,r]\right)
:\frac{y}{v}\in Y\right\}  .$$  Note that
\[
T_{x}(E_{X})=X\cap\lbrack x,xr]\text{, and }T_{\frac{y}{r}}(\Upsilon_r(E_Y)
)=Y\cap\lbrack r^{-1}y,y].
\]
By Proposition \ref{measpresgen} and Lemma \ref{invpres}, we have that $\mathfrak{m}%
_{1,r,V}(E_{X})>2/3$ and $\mathfrak{m}_{1,r,V}(E_{Y})>2/3$.

In order to finish the proof of the theorem, we show that $xys\in XY$ for any $s$ satisfying $V<s<r^{1/3}$.  Towards this end, consider
the set $E_{X}^{\prime}:=\left\{  u\in\varphi_V\left(  \lbrack1,r]\right)  :usx\in
X\right\}$.
Then
\[
E_{X}\cap\lbrack s,r]\subset T_{s}\left(  E_{X}^{\prime}\cap\lbrack
1,\frac{r}{s}]\right)
\]
so that
\begin{alignat}{2}
\mathfrak{m}_{1,r,V}(E_{X}^{\prime})&\geq\mathfrak{m}_{1,r,V}(T_{s}\left(E_{X}^{\prime}\cap\lbrack1, \frac{r}{s}]\right)\notag \\ \notag
						       &\geq\mathfrak{m}_{1,r,V}(E_{X}\cap\lbrack s,r])\notag \\
						       &>2/3-\mathfrak{m}_{1,r,V}([1,s])\notag \\
						       &>1/3.\notag
\end{alignat}
Since $\mathfrak{m}_{1,r,V}(E_{X}^{\prime})+\mathfrak{m}_{1,r,V}(E_{Y})>1$,
there exists $u_{0}\in E_X^{\prime}\cap E_Y$. \ Then $u_{0}sx$ is in
$X$ and $\frac{y}{u_0}$ is in $Y.$ \ Thus $sxy\in XY$, 
as desired.
\end{proof}

\noindent We now obtain a multiplicative analog of the main result of \cite{jin}:

\begin{theorem}
Suppose that $A,B\subseteq\mathbb{N}$ satisfy
$\lBD(A),\lBD(B)>0.$ \ Then there exists $m\in\mathbb{N}$ such that for all
$n\in\mathbb{N}$, there is $x\in \n$ such that,
for every $[u,mu]\subseteq\lbrack x,nx]$, we have $\lbrack u,mu]\cap (A\cdot B)\not=\emptyset$.
\end{theorem}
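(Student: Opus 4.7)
The plan is to apply Lemma \ref{nsjin} with the smallest multiplicative cut $V=\n$, upgrade its pointwise conclusion to a uniform one via saturation, and then transfer. The key observation is that Lemma \ref{nsjin}'s conclusion ``$XY$ contains a nonempty interval in $\mathcal{H}_{jk,N^2,\n}$'' unpacks, in unquotiented terms, as: every $c\in{}^*\n$ in a long internal multiplicative subinterval of $[jk,N^2jk]$ lies within a standard multiplicative factor of some product $de\in{}^*\!A\cdot{}^*\!B$, but with the factor depending on $c$. Replacing the $c$-dependent factor by a single standard factor valid throughout a long interval will yield multiplicatively bounded gaps for $A\cdot B$.

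Concretely, I would first fix $N\in{}^*\n\setminus\n$ and use Proposition \ref{nsubld} to find $j,k\in{}^*\n$ with $\nu_{j,N}({}^*\!A\cap[j,Nj])>0$ and $\nu_{k,N}({}^*\!B\cap[k,Nk])>0$. Taking $V:=\n$ and setting $X:=\varphi({}^*\!A\cap[j,Nj])$, $Y:=\varphi({}^*\!B\cap[k,Nk])$, Proposition \ref{internalmeasurable} yields $\mathfrak{m}(X),\mathfrak{m}(Y)>0$, so Lemma \ref{nsjin} supplies $a,b\in[jk,N^2jk]$ with $b/a$ infinite and the open interval $([a]^\n,[b]^\n)\subseteq XY$ in $\mathcal{H}_{jk,N^2,\n}$. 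Equivalently: whenever $c\in{}^*\n$ satisfies that both $c/a$ and $b/c$ are infinite, there exist $d\in{}^*\!A\cap[j,Nj]$, $e\in{}^*\!B\cap[k,Nk]$, and some standard $m\in\n$ with $de\in[c/m,cm]$.

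Next, for each $m\in\n$ I would define the internal set
$$T_m:=\{c\in[a,b]:(\exists d\in{}^*\!A\cap[j,Nj])(\exists e\in{}^*\!B\cap[k,Nk])\;de\in[c/m,cm]\}.$$
Choose $K\in{}^*\n\setminus\n$ so that $b/(aK^2)$ is still infinite (e.g.\ $K:=\lfloor(b/a)^{1/3}\rfloor$). Then $[aK,b/K]\subseteq\bigcup_{m\in\n}T_m$, so saturation (applied to the decreasing internal sets $[aK,b/K]\setminus T_m$) produces a standard $m_0$ with $[aK,b/K]\subseteq T_{m_0}$. Setting $M:=m_0^2$, $x:=\lceil aK/m_0\rceil$, and $x_1:=\lfloor b/(Km_0)\rfloor$, the ratio $x_1/x$ is still infinite, and for any $u$ with $[u,Mu]\subseteq[x,x_1]$ the point $c:=m_0u$ lies in $[aK,b/K]\subseteq T_{m_0}$, producing some $de\in{}^*\!A\cdot{}^*\!B$ with $de\in[c/m_0,cm_0]=[u,Mu]$. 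Since $[x,nx]\subseteq[x,x_1]$ for every standard $n$, transferring the statement ``for each $n\in\n$ there exists $x\in\n$ such that every $[u,Mu]\subseteq[x,nx]$ meets $A\cdot B$'' completes the proof with gap constant $M$.

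The main obstacle is the saturation step: one must choose $K$ so that $[aK,b/K]$ is a bona fide internal interval contained in the external set $\{c:c/a,b/c\text{ infinite}\}$, while also leaving enough room that, after converting the factor $m_0$ from ``$de\in[c/m_0,cm_0]$'' into the factor $M=m_0^2$ for ``$de\in[u,Mu]$'' with $u=c/m_0$, the outer ratio $x_1/x=b/(aK^2m_0^2)$ remains infinite so that transfer works for every standard $n$.
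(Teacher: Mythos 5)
Your proposal is correct and follows essentially the same route as the paper: the same reduction via Proposition \ref{nsubld}, Proposition \ref{internalmeasurable}, and Lemma \ref{nsjin} to an interval $\varphi([a,b])\subseteq XY$ with $b/a$ infinite, followed by extraction of a standard gap bound and transfer. The only difference is the uniformization step: where you invoke countable saturation on the internal sets $T_m$, the paper simply enumerates ${}^*(A\cdot B)\cap[a,b]$ and takes the maximum ratio of consecutive elements, observing it must be finite since otherwise $\varphi([a,b])$ would not lie in $XY$ — both are valid, and the minor slip in your formula for $x_1/x$ (it is $\approx b/(aK^2)$) is harmless since the quantity is infinite either way.
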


\begin{proof}
We work with the cut $V=\mathbb{N}$. \ Fix $N>\n$; by Proposition \ref{nsubld}, there exists
$j,k\in {}^*\n$ such that $\nu_{j,N}(^{\ast}A\cap [j,jN])>0$ and
$\nu_{k,N}(^{\ast}B\cap [k,kN])>0$. \ Let $X:={}^{\ast }\!{A}\cap [j,jN]\subseteq \mathcal{H}_{j,N}$ and $Y:={}^*B\cap [k,kN]\subseteq \mathcal{H}_{k,N}$.  By Lemma \ref{nsjin}, $XY$ contains a nonempty interval
in $\mathcal{H}_{jk,N^2}$, say $\varphi([a,b])$ with $\frac{b}{a}>\n$.

Let $\left\{  c_{i} \ : \ i\leq M\right\}  $ enumerate $^{\ast}(A\cdot B)\cap\lbrack a,b]$ in increasing order and let $m:=\max_{i<M}\left\{  \left\lceil \frac{c_{i+1}}{c_{i}%
}\right\rceil \right\}  $.  Then $m\in \n$, else $X$ would not contain the entire interval $\varphi([a,b])$.  We claim that this $m$ is as desired.  Indeed, given any $n\in\mathbb{N}$,  we have $b\geq na$ and for any interval $[u,mu]\subseteq [a,na]$ we have $[u,mu]\cap {}^*(A\cdot B)\not=\emptyset$, whence we obtain the existence of the desired $x\in \n$ by transfer.
\end{proof}

\providecommand{\bysame}{\leavevmode\hbox to3em{\hrulefill}\thinspace}
\providecommand{\MR}{\relax\ifhmode\unskip\space\fi MR }
\providecommand{\MRhref}[2]{%
  \href{http://www.ams.org/mathscinet-getitem?mr=#1}{#2}
}
\providecommand{\href}[2]{#2}

\end{document}